\newtheorem{theorem}{Theorem}
\newtheorem{proposition}[theorem]{Proposition}
\newtheorem{lemma}[theorem]{Lemma}
\newtheorem{conjecture}[theorem]{Conjecture}
\newcommand{\bz}{\ensuremath{\mathbf{z}}}
\newcommand{\oz}{\ensuremath{\overline{z}}}
\newcommand{\sgn}{\operatorname{sgn}}
\tikzset{partition/.style={fill,circle,inner sep=1pt},
         part/.style={baseline=0,scale=0.5,bend left=45},
         partlabel/.style={below}}
\tikzstyle{pnt}=[draw,ellipse,fill,inner sep=1pt]
\tikzstyle{opnt}=[ ]
\tikzstyle{pntt}=[draw,ellipse,fill,inner sep=0.5pt]
\tikzstyle{point}=[draw,ellipse,fill,inner sep=2pt]
\newcommand{\oset}[2]{%
  {\mathop{#2}\limits^{\vbox to \ex@{\kern-\tw@\ex@
   \hbox{\ensuremath{#1}}\vss}}}}
\author{Sophie Burrill}
\author{Stephen Melczer}
\author{Marni Mishna}
\title[New Baxter classes and tableau sequences ending with a row shape]%
{A Baxter class of a different kind, and other bijective results
  using tableau sequences ending with a row shape} 
\address[S. Burrill]{Department of Mathematics, Simon Fraser University, Burnaby BC, Canada, V5A 1S6}
\email{srb7@sfu.ca}
\address[S. Melczer]{Cheriton School of Computer Science, University of Waterloo, Waterloo ON Canada  \&  U. Lyon, CNRS, ENS de Lyon, Inria, UCBL, Laboratoire LIP}
\email{smelczer@uwaterloo.ca}
\address[M. Mishna]{Department of Mathematics, Simon Fraser University, Burnaby BC, Canada, V5A 1S6}
\email{mmishna@sfu.ca}
\keywords{Young
  tableaux, nonnesting partitions, matchings, Baxter permutations, bijections,
  oscillating tableaux}
\begin{document}

  \begin{abstract}
  Tableau sequences of bounded height have been central to the
  analysis of $k$-noncrossing set partitions and matchings. We show
  here that familes of sequences that end with a row shape are
  particularly compelling and lead to some interesting connections.
  First, we prove that hesitating tableaux of height at most two
  ending with a row shape are counted by Baxter numbers. This permits
  us to define three new Baxter classes which, remarkably, do not
  obviously possess the antipodal symmetry of other known Baxter
  classes. We then conjecture that oscillating tableau of height
  bounded by~$k$ ending in a row are in bijection with Young tableaux
  of bounded height~$2k$. We prove this conjecture for $k\leq 8$ by a
  generating function analysis. Many of our proofs are analytic in
  nature, so there are intriguing combinatorial bijections to be
  found.
\end{abstract}

\maketitle

%-------------------------------------------------
\section{Introduction}
\label{sec:introduction}
%-------------------------------------------------
The counting sequence for Baxter permutations, whose elements
\[
B_n=\sum_{k=1}^n\frac{\binom{n+1}{k-1}\binom{n+1}{k}\binom{n+1}{k+1}}{\binom{n+1}{1}\binom{n+1}{2}}
\] are known as Baxter
numbers, is a fascinating combinatorial entity, enumerating a
diverse selection of combinatorial classes and resurfacing in many
contexts which do not \emph{a priori\/} appear connected.  The recent
comprehensive survey of Felsner, Fusy, Noy and Orden~\cite{FeFuNoOr11}
finds many structural commonalities among these seemingly diverse
families of objects.

Here we describe three new combinatorial classes which are enumerated
by Baxter numbers. These classes have combinatorial bijections between
them, but {\bf do not} share many of the properties of the other known
Baxter classes aside from a natural Catalan subclass.  Rather, they
connect to the restricted arc diagram family of objects, whose study
was launched by Chen, Deng, Du, Stanley and Yan~\cite{Chetal07}. We
have discovered a bridge between classic Baxter objects and sequences
of tableaux, and consequently walks in Weyl chambers. The main focus
of this article is tableau sequences that end in a (possibly empty)
row shape: as these correspond to a family of lattice walks that end
on a boundary, we are able to deduce enumerative results and find
surprising connections to other known combinatorial
structures. Another consequence of this construction is a new
generating tree description for Baxter numbers.

Our main results are Theorem~\ref{thm:Main} and
Theorem~\ref{thm:Baxter}. They use the integer lattice $W_k=\{(x_k,
\dots, x_1): x_k>\dots, x_1>0, x_i\in\mathbb{N}\}$. All of the other
combinatorial classes referenced in the statement are defined in the
next section.
%-----------------------------------------------
\begin{theorem}
\label{thm:Main}
%-----------------------------------------------
The following classes are in bijection for all nonnegative integers
$m, n$ and $k$:
\begin{enumerate}
\item $\mathcal{H}^{(k)}_{n,m}$: the set of hestitating tableaux of length
  $2n$, with maximum height bounded by~$k$, starting from $\emptyset$, ending in a strip of length
  $m$;
\item $\mathcal{L}^{(k)}_{n,m}$: the set of $W_k$-hesitating lattice walks of
  length $2n$, starting at $\delta=(k, k-1, \dots, 1)$ ending at
  $(m+k, k-1, \dots, 1)$;
\item $\Omega^{(k)}_{n,m}$: open partition diagrams of length $n$
  with~$m$ open arcs, but with no~enhanced $(k+1)$-nesting, nor future enhanced~$(k+1)$-nesting.
\end{enumerate}
\end{theorem}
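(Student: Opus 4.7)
The plan is to factor the theorem into two bijections, $\mathcal{H}^{(k)}_{n,m}\leftrightarrow\mathcal{L}^{(k)}_{n,m}$ and $\mathcal{H}^{(k)}_{n,m}\leftrightarrow\Omega^{(k)}_{n,m}$, and then to compose.

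The first is the classical dictionary between bounded-height tableau sequences and walks in a Weyl chamber. Sending a partition $\lambda=(\lambda_1,\dots,\lambda_k)$ to $(\lambda_1+k,\lambda_2+k-1,\dots,\lambda_k+1)$ identifies partitions with at most $k$ parts with lattice points of $W_k$, mapping $\emptyset$ to $\delta$ and the row $(m)$ to $(m+k,k-1,\dots,1)$. Under this identification, each of the three atomic move types available in a hesitating tableau (add a corner; remove a corner; or add-then-remove in one time step) translates into exactly one of the step types in the definition of a $W_k$-hesitating walk, and the height bound is enforced by the walk remaining in $W_k$. This part is a routine verification of step sets.

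For the second bijection I would adapt the insertion/deletion correspondence of Chen, Deng, Du, Stanley and Yan (CDDSY) to the hesitating regime. Reading the pairs $(T_{2j-1},T_{2j})$ of the sequence one by one, I would build an open partition diagram on $[n]$ inductively, performing at time $j$ either an RSK row-insertion, a reverse row-insertion, or a combined jeu-de-taquin step according to the type of the $j$th pair, and recording on vertex $j$ an opener, closer, or combined arc endpoint. By construction, after step $j$ the recording tableau has shape $T_{2j}$ and its cells are in bijection with the open arcs of the partial diagram.

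The central task is then to translate the two tableau-side constraints into the two diagram-side constraints. That the tableau never exceeds height $k$ corresponds via Greene-type invariants for this correspondence to the absence of any enhanced $(k+1)$-nesting in the completed diagram; this is the standard CDDSY output. The subtler point, and the one I expect to be the main obstacle, is to match the \emph{row}-shape requirement on the terminal tableau $T_{2n}$ with the absence of future enhanced $(k+1)$-nesting among the open arcs. Intuitively, cells in rows $2,\dots,k$ of $T_{2n}$ witness exactly those configurations of open arcs that can be closed so as to create an enhanced nesting of depth up to $k+1$ with existing arcs, so a terminal row shape is equivalent to no such future nesting. I would prove this by induction on $n$, tracking in parallel the modification of the shape strictly above the first row and the set of open arcs that support future nestings through each of the three insertion/deletion operations. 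Composing the two bijections then delivers Theorem~\ref{thm:Main}.
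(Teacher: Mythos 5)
Your handling of (1) $\equiv$ (2) coincides with the paper's (both rest on the proof of Theorem~3.6 of Chen \emph{et al.}, via $\lambda\mapsto\delta+\lambda$), and for (1) $\equiv$ (3) you correctly identify both the right tool (the CDDSY/Marberg insertion--deletion algorithm) and the right pressure point (how the open arcs interact with the terminal row shape). But the equivalence you propose to establish by induction --- that cells above the first row of $T_{2n}$ witness future enhanced nestings, so that ``$T_{2n}$ is a row'' $\Leftrightarrow$ ``no future enhanced $(k+1)$-nesting'' --- is not a correct statement, so the induction has no valid invariant to carry. A future enhanced $(k+1)$-nesting is a depth-$(k+1)$ configuration (an open arc over an enhanced $k$-nesting), and under the correspondence it manifests in the \emph{intermediate} heights of the tableau sequence, not in the shape of $T_{2n}$. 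Concretely, in Table~1 of the paper ($k=2$, $n=2$, $m=1$) the diagram consisting of an open arc at vertex~$1$ over a fixed point at vertex~$2$ has no future enhanced $3$-nesting, yet its tableau passes through a column of height~$2$ before ending in a row of length~$1$; conversely, an open arc sitting over a closed arc is a perfectly legal future enhanced $2$-nesting in $\Omega^{(k)}_{n,m}$ for $k\geq 2$ and must still produce a terminal row. A further obstruction is that an open diagram does not even record the relative nesting order of its open arcs, so ``the shape strictly above the first row determined by the open arcs'' is not well defined until you fix how the open arcs are to be completed.

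The paper resolves exactly this with a small device your proposal lacks: assign artificial right endpoints $n+1,\dots,n+m$ to the open arcs \emph{in left-to-right order of their left endpoints} and run the unmodified algorithm, inserting the artificial value when its opener is processed. These values are inserted in increasing order, exceed every real label, and are never deleted, so once all closed-arc entries have been removed they occupy a single row: the terminal row shape is thus \emph{automatic from the canonical completion}, not a condition to be matched against the diagram. The two nesting conditions then together correspond to the height bound $k$ over the whole sequence, the future enhanced $(k+1)$-nestings being precisely the enhanced $(k+1)$-nestings of the completed diagram that use an artificial arc (at most one artificial arc can occur in a nesting, since the artificial arcs are mutually crossing, and it is necessarily outermost). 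If you reorganize your argument around this completion, the parallel bookkeeping you describe becomes unnecessary; as written, the key step of your proof attempts to prove a false equivalence.
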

The equivalence (1) $\equiv$ (2) follows almost immediately from the
proof of Theorem~3.6 of~\cite{Chetal07}. We discuss it in
Section~\ref{sec:bijections}. We then prove the equivalence (1)
$\equiv$ (3), by providing an explicit bijection. It is a slight
modification of the bijection $\overline{\phi}$ used in the proof of
Theorem~4.3 of~\cite{Chetal07}.

It is well known that both $\mathcal{L}^{(1)}_{n,0}$ and
$\Omega^{(1)}_{n,0}$ are of cardinalty $C_{n}$, the Catalan number of index $n$.
Furthermore, we have the following theorem, proved in Section~\ref{sec:Baxter}. 
%-----------------------------------------------
\begin{theorem}
\label{thm:Baxter}
%-----------------------------------------------
Let $\ell^{(k)}{m}(n)=|\mathcal{L}^{(k)}_{n,m}|$. The number of walks in
$\mathcal{L}^{(2)}_{n,m}$ over all possible values of $m$ is equal to the Baxter number of index
  $n+1$. That is, \[\sum_{m=0}^n \ell^{(2)}_{m}(n)=B_{n+1}.\]
\end{theorem}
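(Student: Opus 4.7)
\medskip

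\noindent\textbf{Proof proposal.} In the spirit of the ``analytic'' proofs flagged in the abstract, my plan is to compute the generating function of the walks counted by $\mathcal{L}^{(2)}_{n,m}$ via a reflection-principle / kernel argument and then identify the boundary specialisation with the generating function for Baxter numbers. Define the refined series
\[
F(t;x,y)=\sum_{n\ge 0}\sum_{a,b\ge 0}\ell_{a,b}(n)\,x^{a}y^{b}t^{n},
\]
where $\ell_{a,b}(n)$ is the number of hesitating lattice walks of length $2n$ in $W_{2}$ starting at $\delta=(2,1)$ and ending at $(a+2,\,b+1)$. With this normalisation one has $\sum_{m=0}^{n}\ell^{(2)}_{m}(n)=[t^{n}]F(t;1,0)$, so the statement to be proved becomes $F(t;1,0)=\sum_{n\ge 0}B_{n+1}t^{n}$.

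Next I would write down the step-by-step functional equation satisfied by $F$. The three types of hesitating double-steps (add/add, remove/add, add/remove) translate, after shifting the origin, into an explicit Laurent step polynomial $S(x,y)$ on $\mathbb{Z}^{2}$; the walks in $W_{2}$ are then walks in the quarter plane avoiding the diagonal, a configuration equivalent to walks in the Weyl chamber of $A_{1}\times A_{1}$ (modulo the shift by $\delta$). Because the underlying group of the step set is finite in this type-$A$ setting, I would apply the reflection principle: the number of $W_{2}$-walks from $\delta$ to $(a+2,b+1)$ equals an alternating sum, over the four elements of the reflection group, of unrestricted walk counts with step polynomial $S$. Equivalently, this is a $2\times 2$ Lindström--Gessel--Viennot determinant counting pairs of non-intersecting paths with the induced step set. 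Specialising the resulting closed form at $y=0$ and $x=1$ collapses the determinant to a single explicit hypergeometric sum.

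The main obstacle is then recognising this hypergeometric sum as the Baxter numbers. I would attack this in two complementary ways: first, compute the $P$-recurrence satisfied by $[t^{n}]F(t;1,0)$ using creative telescoping (Zeilberger's algorithm) applied to the hypergeometric summand, and check that it agrees with the well-known three-term recurrence for $B_{n+1}$ together with the initial values $B_{1}=1,\,B_{2}=2,\,B_{3}=6$; second, as a sanity check, confirm equality with the classical closed form
\[
B_{n+1}=\sum_{k=1}^{n+1}\frac{\binom{n+2}{k-1}\binom{n+2}{k}\binom{n+2}{k+1}}{\binom{n+2}{1}\binom{n+2}{2}}
\]
by matching the two hypergeometric expressions term-by-term (or via a single application of an identity of Saalsch\"utz/Pfaff--Whipple type). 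I expect the recurrence-matching route to be the cleanest and to constitute the main technical step of the proof.

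Finally I would note what the argument does \emph{not} give: a combinatorial bijection between $\bigsqcup_{m}\mathcal{L}^{(2)}_{n,m}$ and any of the classical Baxter classes (say, triples of non-crossing lattice paths, which are counted by a $3\times 3$ LGV determinant rather than the $2\times 2$ one produced here). This dimensional mismatch is the reason the authors highlight in the introduction that the equality is established ``analytically'', and why the new Baxter classes introduced in this paper fail to exhibit the antipodal symmetry enjoyed by classical Baxter classes.
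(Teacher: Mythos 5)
Your endgame (get a closed form or a D-finite representation for the walk counts, then use creative telescoping and match against the Baxter recurrence plus initial conditions) is the same as the paper's, but the middle step of your plan --- that the number of $W_2$-hesitating walks from $\delta$ to a given endpoint ``equals an alternating sum, over the four elements of the reflection group, of unrestricted walk counts,'' equivalently a $2\times 2$ LGV determinant collapsing at $x=1$, $y=0$ to a single explicit hypergeometric sum --- contains a genuine gap. First, a minor point: $W_2=\{x_1>x_2>0\}$ is a chamber of type $B_2/C_2$ (group of order $8$), not $A_1\times A_1$. More seriously, the hesitating step set is \emph{not} reflectable across the coordinate walls: the steps come in ordered pairs (stay then $e_i$; $-e_i$ then stay; $e_i$ then $-e_j$), and the reflection $x_2\mapsto -x_2$ sends the legal pair (stay, $+e_2$) to the illegal pair (stay, $-e_2$). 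Only the transposition $x_1\leftrightarrow x_2$ preserves the pair types, which is exactly why Bousquet-M\'elou and Xin's reflection result (the first proposition quoted in Section~\ref{sec:Baxter}) reduces $W_2$-walks to a signed sum over $S_2$ of \emph{quarter-plane} hesitating walk counts, not unrestricted ones --- and counting those quarter-plane walks is the hard part of the problem. There is no Gessel--Zeilberger/LGV determinant with free entries here, hence no explicit hypergeometric summand to hand to Zeilberger's algorithm; if such a closed form existed, the enumeration of $3$-noncrossing partitions would have been elementary.

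The paper fills this gap by importing Proposition~\ref{thm:BX} of Bousquet-M\'elou and Xin, obtained by the (obstinate) kernel method, which expresses the quarter-plane series $H$ and $V$ as positive/negative-part extractions of an explicit \emph{algebraic} function $Y$. It then rewrites $W(t)=H(1;t)-V(1;t)$ as a diagonal of a bivariate algebraic series (Lemma~\ref{thm:setup}), applies creative telescoping to that diagonal to obtain a fifth-order operator $\mathcal{L}$ annihilating $W(t)$, verifies via the recurrence~\eqref{eqn:baxrec} that the shifted Baxter generating function is also annihilated by $\mathcal{L}$, and compares the first five coefficients. If you replace your reflection/LGV step with this kernel-method input (or some other derivation of the quarter-plane generating functions), the remainder of your plan --- telescoping plus recurrence matching --- goes through essentially as in the paper; as written, the explicit closed form you intend to telescope does not exist.
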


The related class of \emph{oscillating} tableaux that end in a row also gives
rise to some interesting results. 
%-----------------------------------------------------------------------------
\begin{theorem}
The following classes are in bijection:
\begin{enumerate}
\item Oscillating tableaux of length~$n$, maximum height bounded by~$k$ ending in
  a strip of length $m$;
\item The set of $W_k$-oscillating lattice walks of length $n$ ending
  at $(m+k, k-1, \dots, 1)$;
\item Open matching diagrams of length $n$ with no~$(k+1)$-nesting, nor future~$(k+1)$-nesting, with $m$ open arcs.
\end{enumerate}
\label{thm:Main2}
\end{theorem}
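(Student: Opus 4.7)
The plan is to follow the template used for Theorem~\ref{thm:Main}, but in the simpler oscillating setting: both individual bijections can be imported from~\cite{Chetal07} essentially verbatim, without the modifications required in the hesitating case.

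For $(1)\equiv(2)$, I would apply the classical shift sending a partition $(\lambda_1,\dots,\lambda_k)$ (padded with zeros if necessary) to $(\lambda_1+k,\lambda_2+(k-1),\dots,\lambda_k+1)$. The empty tableau maps to $\delta=(k,k-1,\dots,1)$; the height bound becomes the requirement that the walk stay in $W_k$; and an oscillating move that adds or removes a cell in row $i$ translates to a step of $\pm e_{k+1-i}$, which is exactly a $W_k$-oscillating step. The ending shape $(m)$ encodes to $(m+k,k-1,\dots,1)$, matching the stated endpoint. This is the encoding used in the proof of Theorem~3.6 of~\cite{Chetal07}, specialized to ordinary oscillating moves rather than the richer hesitating step set.

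For $(1)\equiv(3)$, I would apply the bijection $\phi$ of~\cite{Chetal07} unchanged; the modifications made for the hesitating case were forced by the richer step structure there and are unnecessary in the oscillating setting. Given an oscillating tableau $(T_0,\dots,T_n)$, the map $\phi$ records at each step $j$ whether $j$ opens or closes an arc and which one, so that the number of open arcs at the end equals $|T_n|=m$, and the height-$k$ bound on the tableau translates to the forbiddance of $(k+1)$-nestings among closed arcs.

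The technical heart of the argument, and the main obstacle, is to check that ``$T_n$ equals the single-row shape $(m)$'' is the image under $\phi$ of ``no future $(k+1)$-nesting'': a final shape with two or more nonempty rows leaves pairs of open arcs that can be completed so as to extend a chain of already-closed nested arcs into a $(k+1)$-nesting, while a single-row ending prevents such an extension. Both directions should follow by tracing through the RSK-style insertion rule that defines $\phi$ and reading off the chains of nested arcs that the multi-row shape would produce. Once this correspondence is established, composition with the $(1)\equiv(2)$ encoding delivers the three-way bijection in the theorem.
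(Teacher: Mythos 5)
The overall architecture is right and matches what the paper intends (the paper omits this proof, saying only that it is analogous to that of Theorem~\ref{thm:Main}), and your treatment of $(1)\equiv(2)$ via the $\delta$-shift is fine. The problem is in $(1)\equiv(3)$: the correspondence you single out as the ``technical heart'' --- that the single-row ending shape is the image of ``no future $(k+1)$-nesting,'' while the height bound is accounted for by ``no $(k+1)$-nesting among the closed arcs'' --- is false, and the proof as outlined would founder there. Take $k=1$ and the open matching diagram on $\{1,2,3\}$ with an open arc at $1$ and a closed arc $(2,3)$. There is no $2$-nesting among the closed arcs, and the associated tableau sequence ends in the single-row shape $(1)$; yet the diagram contains a future $2$-nesting, and the tableau reaches height $2$ at the intermediate step: the artificial right endpoint of the open arc is inserted at vertex $1$, then the value $3$ is inserted at vertex $2$, and these two insertions form a decreasing sequence, which under the nesting--row correspondence produces the shape $(1,1)$ before the deletion at vertex $3$ restores a single row. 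So a single-row ending does not detect future nestings, and closed-arc nestings alone do not control the height; both halves of your proposed equivalence fail already on this example.

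The correct division of labour --- the one used in the paper's proof of the analogous proposition for hesitating tableaux, which is what carries over here --- is essentially the opposite. The single-row ending is automatic, but only because of the open-arc modification you propose to drop: one labels the artificial right endpoints of the open arcs $n+1,\dots,n+m$ from left to right and inserts them like ordinary right endpoints; being inserted in increasing order and never deleted, they occupy a single row of length $m$ when everything else has been removed. The height bound $k$ throughout the sequence then corresponds to the \emph{completed} diagram being $(k+1)$-nonnesting, and that is equivalent to the conjunction of ``no $(k+1)$-nesting'' and ``no future $(k+1)$-nesting'': a completed open arc has its right endpoint beyond all original vertices, so it nests over any $k$-nesting of closed arcs lying to its right, while the completed open arcs mutually cross, so at most one of them can participate in any nesting chain. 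Reorganizing your argument along these lines repairs it; as written, the key step you would need to prove is not true.
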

%-----------------------------------------------------------------------------
The proof of this theorem is analogous to the proof of the
previous theorem, and we do not present it here. Rather, we are highly
intrigued by a fourth class that also appears to be in bijection.
%------------------------------------------------------------------------------
\begin{conjecture}
  The set of $W_k$-oscillating lattice walks of length $n$ ending
  at the boundary $\{(m+k, k-1, \dots, 1 ): m\geq 0\}$ is in bijection
  with standard Young tableaux of size $n$, with height bounded by~$2k$.
\label{thm:conjecture}
\end{conjecture}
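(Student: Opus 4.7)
The plan is to mirror the generating-function strategy used for Theorem~\ref{thm:Baxter}: derive closed forms for the counts on each side via the reflection principle, and compare these for each small $k$. Via Theorem~\ref{thm:Main2}, the left-hand count equals the number of $W_k$-oscillating walks (unit steps $\pm e_i$) starting from $\delta=(k,k-1,\ldots,1)$ and ending at any point of the form $(m+k, k-1, \ldots, 1)$ with $m \geq 0$. The Gessel--Zeilberger / Grabiner--Magyar reflection principle expresses this number, for fixed $m$ and $n$, as a signed sum over the hyperoctahedral group of simple multinomial-type terms; summing over $m \geq 0$ yields a single closed form in $n$.

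On the other side, standard Young tableaux of size $n$ with at most $2k$ rows correspond to walks with unit steps $+e_i$ in the dominant Weyl chamber of type $A_{2k-1}$ starting at the origin, and are counted by an analogous signed sum over $S_{2k}$ of multinomials (by Lindstr\"om--Gessel--Viennot, or equivalently the reflection principle in type $A$). For each $k=1,\ldots,8$, I would compute both generating functions symbolically in a computer algebra system, extract their minimal $D$-finite annihilators, and verify equality by matching enough initial terms beyond the order of those operators. This is tractable because the signed sums grow only polynomially in $k$, and the resulting $D$-finite series are of modest order for each fixed $k$.

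The main obstacle is lifting the case-by-case verification to a uniform identity valid for all~$k$. The mismatch in rank---$k$ on the left versus $2k$ on the right---strongly suggests a folding argument: each pair of consecutive $+e_i$/$-e_j$ moves in an oscillating $W_k$-walk should correspond to a single box-addition move in a type-$A_{2k-1}$ walk, paralleling the classical inclusion $C_k \hookrightarrow A_{2k-1}$ of root systems. Turning this heuristic either into a determinantal identity between the two signed sums, or (more satisfyingly) into an explicit bijection extending the RSK-style maps of Chen, Deng, Du, Stanley and Yan used earlier in the paper, is where the real content of the conjecture lies and is almost certainly the hardest step.
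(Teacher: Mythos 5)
Your strategy matches the paper's almost exactly: the statement is left as a conjecture there too, with the cases $k\leq 8$ verified by comparing the Grabiner--Magyar reflection-principle determinant for the $W_k$-oscillating walks (summed over $m$, which telescopes) against the Gordon--Houten--Bender--Knuth Bessel-function determinant for standard Young tableaux of height at most $2k$, alongside a second formulation of both sides as diagonals of rational functions. Your D-finite/initial-terms verification is a rigorous equivalent of their direct determinant comparison, and like the authors you correctly identify that the uniform identity for all $k$ (and any bijection behind it) is the genuinely open content.
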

%------------------------------------------------------------------------------
As far as we can tell, this was first conjectured by
Burrill~\cite{Burr14}. Using the lattice path characterization, we
access generating function expressions, and prove the conjecture
for $k\leq 8$. We discuss the strong evidence for Conjecture~\ref{thm:conjecture}
in Section~\ref{sec:Conjecture}.

%----------------------------------------------------------------------
\section{The Combinatorial Classes}
\label{sec:combclasses}
%----------------------------------------------------------------------
We now describe the combinatorial classes mentioned in the two main
theorems.
\subsection{Tableaux families}
There are three tableaux families that we consider. 
\begin{description}
\item[oscillating tableau] This is a sequence of Ferrers diagrams such
  that at every stage a box is either added, or deleted. The sequences
  start from an empty shape, and have a specified ending shape. The
  size is the length of the sequence.
\item[standard Young tableau] This is an oscillating tableau where
  boxes are only added and never removed.
\item[hesitating tableau] This is a variant of the oscillating
  tableaux. They are even length sequences of Ferrers diagrams that
  start from an empty tableaux. The sequence is composed of pairs of moves
  of the form: (i) do nothing then add a
  box; (ii) remove a box then do nothing; or (iii) add a box
  then remove a box.
\end{description}
In each case, if no diagram in the sequence is of height $k+1$, we say that the
tableau has height bounded by $k$.  This is how we arrive
to the set $\mathcal{H}^{(k)}_{n,m}$, of hestitating tableaux of
length $2n$, with height bounded by~$k$, ending in the
shape~$(m)$\footnote{Here, $(0)\equiv\emptyset$}.

\subsection{Lattice walks} 
We focus on two different lattice path families. We are interested in walks
in the region\footnote{Remark this model is a reparametrization of the
  region considered in~\cite{Chetal07}.  We slightly shift the notation of
  \cite{BoXi05}.}~$W_k=\{(x_1,x_2, \dots,
x_k): x_i \in \mathbb{Z}, x_1>x_2>\dots>x_k> 0 \}$ starting at the
point $\delta=(k, k-1, \dots, 1)$. Let $e_i$ be the elementary vector
with a 1 at position $i$ and 0 elsewhere. We permit steps that do
nothing, which we call ``stay steps''.
 
The class of \emph{$W_k$-oscillating walks\/} starts at $\delta$ and
takes steps of type $e_i$ or $-e_i$, for $1\leq i\leq k$. We define
$\mathcal{O}^{(k)}_{n,m}$ to be the set of oscillating walks in $W_k$ of
length $n$, ending at the point $me_1+\delta$. 

A \emph{$W_k$-hesitating walk\/} is of even length, and steps occur in the
following pairs: (i) a stay step followed by an $e_i$  step; (ii) a
$-e_i$ step followed by a stay step; (iii) an $e_i$  step follow by $-e_j$ step. 
We focus on~$\mathcal{L}^{(k)}_{n,m}$: the set of hesitating lattice walks of
  length $2n$ in $W_k$, starting at $(k, k-1, \dots, 1)$ and ending at $(m+k, k-1, \dots,1)$.

\subsection{Open arc diagrams}
Matchings and set partitions are two combinatorial classes that have
natural representations using arc diagrams. In the arc diagram 
representation of a set partition of~$\{1, 2, \ldots, n\}$, a row of increasing
vertices is labelled from~$1$ to~$n$. A partition block~$\{a_1, a_2, \ldots, a_j\}$,
ordered~$a_1<a_2<\ldots<a_j$, is represented by the arcs~$(a_1,a_2), (a_2,a_3), \ldots,(a_{j-1}, a_j)$ which are always drawn above the 
row of vertices. The set partition~$\pi=\{\{1,3,7\},\{2,8\},\{4\},\{5,6\}\}$ is depicted 
as an arc diagram in Figure~\ref{fig:expartition}. Matchings are
represented similarly, with each pair contributing an arc. 
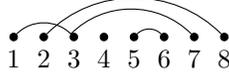
\begin{figure}[h!]
\centering
\begin{tikzpicture}[scale=0.4]
\foreach \x in {1,2,...,8}{
\node[pnt, label=below:{$\x$}](\x) at (\x,0){};}
\draw[bend left=45](1) to (3) to (7);
\draw[bend left=45](2) to (8);
\draw[bend left=45](5) to (6);
\end{tikzpicture}
\caption{The set partition~$\pi=\{1,3,7\},\{2,8\},\{4\},\{5,6\}$}
\label{fig:expartition}
\end{figure} 
A set of $k$ arcs $(i_1, j_1), \dots, (i_k, j_k)$ form a
\emph{$k$-nesting} if $i_1<i_2<\dots <i_k<j_k<\dots<j_2<j_1$. They
form an \emph{enhanced $k$-nesting} if $i_1<i_2<\dots
<i_k\leq j_k<\dots<j_2<j_1$. They form a \emph{$k$-crossing} if
$i_1<i_2<\dots<i_k<j_1<j_2<\dots<j_k$. Figure~\ref{fig:3nest}
illustrates a $3$-nesting, an enhanced $3$-nesting, and a
$3$-crossing. A partition is $k$-nonnesting if it does not contain any
collection of edges that form a $k$-nesting. 

%------------------------------------------------------------------
% $3$-nesting and $3$-crossing
%------------------------------------------------------------------
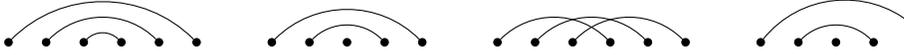
\begin{figure}[h!]
\center
\begin{tikzpicture}[scale=0.5]
\node[pnt] at (0,0)(1){};
\node[pnt] at (1,0)(2){};
\node[pnt] at (2,0)(3){};
\node[pnt] at (3,0)(4){};
\node[pnt] at (4,0)(5){};
\node[pnt] at (5,0)(6){};
%-----------------
\node[pnt] at (7,0)(1c){};
\node[pnt] at (8,0)(2c){};
\node[pnt] at (9,0)(3c){};
\node[pnt] at (10,0)(4c){};
\node[pnt] at (11,0)(5c){};
%-----------------
\node[pnt] at (13,0)(1b){};
\node[pnt] at (14,0)(2b){};
\node[pnt] at (15,0)(3b){};
\node[pnt] at (16,0)(4b){};
\node[pnt] at (17,0)(5b){};
\node[pnt] at (18,0)(6b){};
%-----------------
\node[pnt] at (20,0)(1d){};
\node[pnt] at (21,0)(2d){};
\node[pnt] at (22,0)(3d){};
\node[pnt] at (23,0)(4d){};
%-----------------
\draw(1)  to [bend left=45] (6);
\draw(2)  to [bend left=45] (5);
\draw(3)  to [bend left=45] (4);
\draw(1c)  to [bend left=45] (5c);
\draw(2c)  to [bend left=45] (4c);
\draw(1b)  to [bend left=45] (4b);
\draw(2b)  to [bend left=45] (5b);
\draw(3b)  to [bend left=45] (6b);
\draw(1d)  to [bend left=45] (24,0.5);
\draw(2d)  to [bend left=45] (4d);
\end{tikzpicture}
\caption{A $3$-nesting, an enhanced $3$-nesting, a $3$-crossing, and a
future enhanced $3$-nesting}
\label{fig:3nest}
\end{figure}

Recently, Burrill, Elizalde, Mishna and Yen~\cite{Buetal12}
generalized arc diagrams by permitting semi-arcs: in the diagrams,
each arc must have a left endpoint, but not necessarily a right
endpoint. The notion of $k$-nesting is generalized to
a~\emph{future~$k$-nesting}, which is a pattern with an open arc to
the left of a~$(k-1)$-nesting, and a future enhanced $k$-nesting is
defined similarly. An example is given in
Figure~\ref{fig:3nest}. Burrill~\emph{et al.}~\cite{Buetal12}
conjectured that open partition diagrams with no future enhanced
3-nestings, nor enhanced 3-nestings were counted by Baxter numbers.

%-----------------------------------------------
\section{Bijections} 
\label{sec:bijections}
%-----------------------------------------------
We now give the bijections to prove Theorem~\ref{thm:Main}.
Table~\ref{fig:allsizetwo} illustrates the different mappings in the case of $n=2$,
$k=2$. 

%%%%%%%%%%%%%%%%%%%%%%%%%%%%%%%%
\begin{table}[h!]
\center
\begin{tabular}{l||llll}
& {\bf Tableaux} $\mathcal{H}^{(2)}_{2,m}$
& {\bf Walks} $\mathcal{L}^{(2)}_{2,m}$
& {\bf Open partitions} $\Omega^{(2)}_{2,m}$&\\[2mm]
%-------------------------------------------------------------------------
$m=0$
&$\emptyset\, \square\, \emptyset\, \square\, \emptyset $ &
(2,1)-(3,1)-(2,1)-(3,1)-(2,1) & 
\begin{tikzpicture}[scale=0.4]
\foreach \x in {1,2}{\node[pnt] at (\x, 0){};};
\end{tikzpicture}\\

&$\emptyset\, \emptyset\, \square\, \emptyset\, \emptyset $ &
(2,1)-(2,1)-(3,1)-(2,1)-(2,1) &  
\begin{tikzpicture}[scale=0.4]
\foreach \x in {1,2}{\node[pnt] at (\x, 0){};};
\draw[bend left=65](1,0) to (2,0);
\end{tikzpicture}\\[2mm]\hline
%-------------------------------------------------------------------------

&$\emptyset\, \square\, \emptyset\, \emptyset\, \square\,$ &
(2,1)-(3,1)-(2,1)-(2,1)-(3,1) &  
\begin{tikzpicture}[scale=0.4]
\foreach \x in {1,2}{\node[pnt] at (\x, 0){};};
\draw[bend left=45](2,0) to (3,.5);
\end{tikzpicture}\\

$m=1$
&$\emptyset\, \emptyset\, \square\, \square\hspace{-0.5mm}\square \, \square$ & 
(2,1)-(2,1)-(3,1)-(4,1)-(3,1)
&
\begin{tikzpicture}[scale=0.4]
\foreach \x in {1,2}{\node[pnt] at (\x, 0){};};
\draw[bend left=45](1,0) to (2,0);
\draw[bend left=45](2,0) to (3,0.5);
\end{tikzpicture}\\

&$\emptyset\, \emptyset\, \square\, \begin{tikzpicture}[scale=0.21]\draw(0,0) to (1,0) to (1,1) to (0,1) to (0,0);
\draw(1,1) to (1,2) to (0,2) to (0,1); \end{tikzpicture} \,   \square $ &
(2,1)-(2,1)-(3,1)-(3,2)-(3,1) 
&\begin{tikzpicture}[scale=0.4]
\foreach \x in {1,2}{\node[pnt] at (\x, 0){};};
\draw[bend left=20](1,0) to (3,.8);
\end{tikzpicture}\\[2mm]\hline
%-------------------------------------------------------------------------
$m=2$&$\emptyset\, \emptyset\, \square\, \square\,
\square\hspace{-0.5mm}\square $ &
(2,1)-(2,1)-(3,1)-(3,1)-(4,1) &  
\begin{tikzpicture}[scale=0.4]
\foreach \x in {1,2}{\node[pnt] at (\x, 0){};};
\draw[bend left=45](1,0) to (3,1);
\draw[bend left=45](2,0) to (3,0.5);
\end{tikzpicture}\\
&&\\
\end{tabular}
\caption{The six objects of size~$n=2$ for each class in Theorem~\ref{thm:Main} }
\label{fig:allsizetwo}
\end{table}
%%%%%%%%%%%%%%%%%%%%%%%%%%%%%%%%

\subsection{Tableaux to walks}
The bijection between $\mathcal{H}^{(k)}_{n,m}$ and
$\mathcal{L}^{(k)}_{n,m}$ is a straightforward consequence of the
proof of Theorem~3.6 of~\cite{Chetal07}. The authors give the bijection
explicitly for vacillating tableaux, but the results follow directly
for the case of hesitating tableaux. In their bijection, a shape
$\lambda=(\lambda_1, \dots, \lambda_k)$ in a hesitating tableaux
corresponds to a point in position $\delta+\lambda=(\lambda_1+k,
\lambda_2+k-1, \dots, \lambda_k+1)$ in the hestitating walk.

\subsection{Arc diagrams and tableaux}
We modify Chen~\emph{et al.}'s bijection between set partitions and
hesitating tableaux to incorporate the possibility that an edge is not
closed.  In this abstract, we simply describe how to modify Marberg's
presentation in~\cite{Marb13} of the bijection to handle open arcs. We
refer readers to that article, or to Chen~\emph{et al.} to get the
full description.

\begin{figure}
\centering
\begin{tikzpicture}[scale=0.5]
\foreach \x in {1,2,...,5}{
\node[pnt, label=below:{$\x$}](\x) at (\x,0){};}
\node[pnt, label=right:{$8$}](8) at (6,.5){};
\node[pnt, label=right:{$7$}](7) at (6,1.2){};
\node[pnt, label=right:{$6$}](6) at (6,1.9){};
\draw[bend left=45](1) to (3);
\draw[bend left=45](3) to (7);
\draw[bend left=35](2) to (6);
\draw[bend left=25](5) to (8);
\end{tikzpicture}
\caption{Preparing an open diagram for the bijection.}
\label{fig:expartition2}
\end{figure}
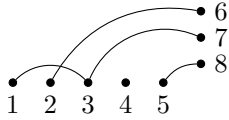
%%%%%%%%%%%%%%%%%%%%%%

\begin{proposition} The subset of~$\Omega_{n,m}^{(k)}$ comprised of open partition
  diagrams of size~$n$ avoiding both enhanced~$(k+1)$-nestings, and
  future enhanced~$(k+1)$-nestings consisting of diagrams with~$m$ open arcs, is in bijection
  with hesitating tableaux of length~$2n$ of maximum height~$k$,
  ending with a single row of length~$m$.
\end{proposition}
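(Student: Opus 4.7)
The plan is to reduce the statement to the Chen--Deng--Du--Stanley--Yan bijection between (closed) set partitions and hesitating tableaux, by canonically closing all open arcs of the input diagram. Given an open diagram $D \in \Omega_{n,m}^{(k)}$ whose open-arc left endpoints are $a_1 < a_2 < \cdots < a_m$, I would define the closure $\overline{D}$ to be the set partition on $[n+m]$ obtained by appending $m$ new vertices and adding the arcs $(a_j, n+j)$ for $1 \leq j \leq m$, as illustrated in Figure~\ref{fig:expartition2}. Because these new arcs pairwise cross and their right endpoints lie to the right of every vertex of $D$, no two of them can appear together in a nesting. Consequently, any enhanced $(k+1)$-nesting in $\overline{D}$ either lies entirely within $D$, or uses exactly one new arc $(a_j, n+j)$ as its outermost arc with the inner enhanced $k$-nesting lying to the right of the open left endpoint $a_j$: the two cases correspond respectively to enhanced and future enhanced $(k+1)$-nestings of $D$. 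Hence $D$ avoids both forbidden patterns if and only if $\overline{D}$ is enhanced $(k+1)$-nonnesting.

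Applying Chen et al.'s bijection to $\overline{D}$ then produces a hesitating tableau $T$ of length $2(n+m)$ and height at most $k$ from $\emptyset$ to $\emptyset$. Since the $m$ appended vertices are pure closers, the last $2m$ steps of $T$ consist of $m$ successive remove-then-do-nothing pairs. I would declare the image of $D$ to be the truncation $T'$ of $T$ to its first $2n$ steps, a hesitating tableau of length $2n$ ending in some shape $\mu \vdash m$. The map admits an immediate inverse: given any hesitating tableau of length $2n$ ending in the row shape $(m)$, appending $m$ remove-then-do-nothing pairs yields a tableau of length $2(n+m)$ ending at $\emptyset$ to which the inverse of Chen et al.'s bijection applies, and one then erases the final $m$ vertices of the resulting set partition to recover $D$.

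The crux, and the main obstacle, is to show that the terminal shape $\mu$ is always the single row $(m)$. At step $2n$ the cells of the tableau correspond to the $m$ currently active openers $a_1 < \cdots < a_m$, inserted under Marberg's RSK-style rule as $\overline{D}$ is read left to right. I would track the positions of the boxes created for these openers through the intermediate closings of arcs internal to $D$, and argue via two standard facts about the insertion procedure---that row-insertion of an increasing sequence of values grows a single row, and that closing an internal arc bumps only a cell belonging to that arc's partner---that the $m$ open-arc cells accumulate in the first row and are not displaced by any subsequent operation prior to step $2n$. This forces $\mu = (m)$ and completes the bijection. The delicate bookkeeping in this last step is precisely why the authors refer the reader to the full treatment in \cite{Chetal07, Marb13} rather than repeat the insertion rules here.
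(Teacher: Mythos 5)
Your overall strategy is the same as the paper's: you close each open arc with a new, canonically labelled right endpoint placed after all $n$ original vertices (exactly the labelling of Figure~\ref{fig:expartition2}) and run the Chen--Deng--Du--Stanley--Yan/Marberg insertion; your ``apply the bijection to $\overline{D}$ and truncate at step $2n$'' is literally the paper's ``insert the artificial labels $n+1,\dots,n+m$ and never delete them,'' since the first $2n$ steps of the two processes coincide. Your verification that $D$ avoids enhanced and future enhanced $(k+1)$-nestings if and only if $\overline{D}$ avoids enhanced $(k+1)$-nestings --- because the appended arcs pairwise cross, so at most one can occur in a nesting and only as its outermost arc --- is correct, and it makes explicit a point the paper's proof leaves to the reader.

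The lemma you propose for the crux step, however, is false as stated. You claim the $m$ cells created for the open-arc labels ``accumulate in the first row and are not displaced by any subsequent operation prior to step $2n$.'' Take $n=2$ with an open arc at vertex $1$ and vertex $2$ a fixed point (the third $m=1$ entry of Table~\ref{fig:allsizetwo}): the artificial label $3$ is row-inserted at step $2$ and sits alone in row $1$, but at step $3$ the fixed point inserts the value $2$, which bumps $3$ into row $2$ --- this is exactly why the intermediate shape in that table entry is a vertical domino --- and only the deletion at step $4$ returns $3$ to row $1$. So the open-arc cells do get displaced from the first row, and an induction on ``never displaced'' cannot close. What is true, and what ``by RSK'' abbreviates in the paper's proof, is the spanning-arc lemma underlying \cite{Chetal07}: after vertex $i$ has been processed, the tableau is the RSK insertion tableau of the right-endpoint labels of the arcs $(l,r)$ with $l\le i<r$, taken in the order in which those arcs were opened. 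At $i=n$ the arcs in progress are precisely the open ones, their labels $n+1<\dots<n+m$ arrive in increasing order (openers $a_1<\dots<a_m$), and an increasing word inserts to a single row. Replacing your displacement claim by this lemma repairs the argument; the remainder of your proposal, including the inverse map, then goes through.
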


\begin{proof}
We prove this by describing how to process open arcs. We assume the
reader has some familiarity with the original bijection to have a
brief presentation of our bijection.  We read diagrams
from left to right, and insert and delete cells under the same rules as
Marberg~\cite{Marb13}. The insertion depends on the right
endpoints of the arcs. If the diagram has $n$ vertices, we incrementally label the
right endpoints of the open arcs with labels $n+1$ to $n+m$ from left
to right, as seen in Figure~\ref{fig:expartition2}.

Recall how vertices are processed in this bijection. Each vertex
corresponds to two steps; if~$i$ is a left endpoint, first do nothing
and then insert its (possibly artificial) right endpoint into the
tableau; if~$i$ is a right endpoint, delete it from the tableau and
then do nothing; if~$i$ is both a right endpoint of some arc~$(i', i)$
and left endpoint of~$(i, i'')$, first insert~$i''$ into the tableaux
and then delete~$i$. If $i$ is a fixed point, first insert $i$, then
delete $i$. The insertion is done with RSK, and the deletion is simple
cell deletion. 

In this process, the values corresponding to open arcs are inserted in
the tableau in order, and are never deleted. By RSK, they will all be
in the same row. Once the other points are deleted, all that will
remains is a single row of length $m$.

The inverse map of Marberg (and ultimately Chen \emph{et al.})  is similarly
adapted.
\end{proof}

%%%%%%%%%%%%%%%%%%%%%%%%%%%%%%%%
\section{When $k=2$ the complete generating function is Baxter}
\label{sec:Baxter}
\subsection{Baxter permutations}
Baxter numbers comprise entry A001181 of the Online Encylopedia of
Integer Sequences (OEIS)~\cite{oeis}, and their main interpretation is the
number of permutations $\sigma$ of $n$ such that there are no indicies
$i<j<k$ satisfying
$\sigma(j+1)<\sigma(i)<\sigma(k)<\sigma(j)\quad\text{or}\quad\sigma(j)<\sigma(k)<\sigma(i)<\sigma(j+1).
$
They were introduced by  Baxter~\cite{Baxter64} in a
question about compositions of commuting functions, and have since been found in
many places in combinatorics.

Chung~\emph{et al.}~\cite{ChGrHoKl78} found the explicit formula for
the $n^\text{th}$ Baxter number given in Section~1, and also gave a
second order linear recurrence:
\begin{equation}\label{eqn:baxrec}
8(n+2)(n+1)B_n + (7n^2+49n+82) B_{n+1}-(n+6)(n+5) B_{n+2} =0.
\end{equation}

We prove Theorem~\ref{thm:Baxter} by a a generating function
argument. We use the notation $\overline{x}=\frac{1}{x}$, and work in
the ring of formal series $\mathbb{Q}[x, \overline{x}] [\![t]\!]$.
The operator $PT_x$ (resp $NT_x$) extracts positive (resp. negative)
powers of $x$ in series of $\mathbb{Q}[x, \overline{x}] [\![t]\!]$. We
relate this to the diagonal operator $\Delta$ defined by : $\Delta
\sum_{i,j \in \mathbb{N}} f_{i,j} x^it^j = \sum_n f_{n,n} t^n$.

A bivariate series in $\mathbb{Q}[x] [\![t]\!]$ is D-finite with
respect to $t$ and $x$ if the set of its partial derivatives spans a
finite dimensional vector space over
$\mathbb{Q}$. Lipshitz~\cite{Lips88} proved that if $F(x,t)\in
\mathbb{Q}[x] [\![t]\!]$ is D-finite then so is $f(t)=\Delta
F(x,t)$. This result is effective, and creative telescoping strategies
have resulted in efficient algorithms for computing such
differential equations~\cite{Chyz00, BoLaSa13}: that is, given a
system of differential equations satisfied by $F(x,t)$, one can
compute the linear differential equation satisfied by $\Delta
F(x,t)=f(t)$. There are several implementations in various
computer algebra systems, such as \textsf{Mgfun} by Chyzak in Maple~\cite{Chyz94}, and the \textsf{HolonomicFunctions}
package of Koutschan in Mathematica~\cite{Kout10}.

Roughly, the strategy is to use elimination in an Ore algebra or an ansatz of undetermined coefficients
to compute differential operators annihilating the multivariate integral
\begin{equation} \frac{1}{2\pi i} \int_\Omega \frac{F(x,t/x)}{x} dx = \Delta F(x,t), \end{equation}
% We only need the d=2 case
%\begin{align}
%  \frac{1}{2\pi i} \int_\Omega \frac{B(z_1,z_2/z_1,z_3,\dots,z_d,t)}{z_2} d z_2 &= \Delta_{1,2} B(\bz,t) \\
%  \left(\frac{1}{2\pi i}\right)^d \int_T \frac{B(z_1,z_2/z_1,z_3/z_2,\dots,z_d/z_{d-1},t/z_d)}{z_1 z_2 \cdots z_d} d\bz  &= \Delta B(\bz,t),
%\end{align}
where $\Omega$ is an appropriate contour in $\mathbb{C}$ containing the origin.

\subsection{A generating function for hesitating walks ending on an axis}
We recall Proposition~12 of Bousquet-M\'elou and Xin~\cite{BoXi05}; As
noted earlier, we have shifted our indices, and our presentation of
their results are duely adapted.  Here $Q_k$ is the first quadrant
$Q_k=\{(x_1, \dots, x_k): x_i> 0\}$.
\begin{proposition}[Bousquet-M\'elou and Xin, Propostion 12 of~\cite{BoXi05}]
  For any starting and ending points $\lambda$ and $\mu$ in $W_k$, the
  number of $W_k$-hesitating walks going from $\lambda$ to $\mu$ can
  be expressed in terms of the number of $Q_k$ hesitating walks as
  follows:
\[
w_k(\lambda, \mu, n) = \sum_{\pi\in S_k} (-1)^\pi q_k(\lambda, \pi(\mu), n),
\]
where $(-1)^\pi$ is the sign of $\pi$ and $\pi(\mu_1, \dots,
\mu_k)=(\mu_{\pi(1)}, \dots, \mu_{\pi(k)})$.
\end{proposition}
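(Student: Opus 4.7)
The plan is to establish the formula by a reflection-principle argument in the style of Gessel–Zeilberger. The chamber $W_k$ is carved out of the positive orthant $Q_k$ by the diagonal hyperplanes $H_i: x_i = x_{i+1}$ for $i = 1, \dots, k-1$, and the reflections $s_i$ across these hyperplanes (which swap coordinates $i$ and $i+1$) generate the symmetric group $S_k$. The axis walls $x_j = 0$ are absorbed into the definition of $Q_k$, which is why the sum runs only over $S_k$ rather than the full hyperoctahedral group: the residual walls one still needs to reflect across form exactly the type $A_{k-1}$ arrangement.

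First I would verify the two compatibilities on which the argument rests: the positive orthant $Q_k$ is stable under coordinate permutations, and each of the three hesitating step-pairs $(0, e_i)$, $(-e_i, 0)$, $(e_i, -e_j)$ is sent by any $\sigma \in S_k$ to another pair of the same type, since a relabeling of coordinates preserves the pair structure. Together these imply that the $S_k$-action sends $Q_k$-hesitating walks to $Q_k$-hesitating walks. For each $\pi \in S_k$, the quantity $q_k(\lambda, \pi(\mu), n)$ then counts $Q_k$-hesitating walks from $\lambda$ to $\pi(\mu)$, and the right-hand side is the corresponding signed total.

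Next I would construct the sign-reversing involution. Given a pair $(\pi, w)$ with $w$ a $Q_k$-hesitating walk from $\lambda$ to $\pi(\mu)$ that \emph{meets} one of the diagonal walls at a pair-endpoint, let $t$ be the least such time and let $j$ be the smallest index with $w(t) \in H_j$. Replace $w$ by the walk $w'$ that agrees with $w$ on $[0,t]$ and equals $s_j \circ w$ on $[t, 2n]$, and replace $\pi$ by $\pi s_j$. Since $w(t)$ is fixed by $s_j$, the walk $w'$ is continuous, lies in $Q_k$ (by $S_k$-invariance), ends at $\pi s_j(\mu)$, and is again a hesitating walk. Because $w$ and $w'$ coincide on $[0, t]$, the first contact time of $w'$ is still $t$ and, because $w'(t) = w(t)$ is on the same set of diagonal walls, the minimal index is still $j$; this makes the map an involution. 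As $\operatorname{sgn}(\pi s_j) = -\operatorname{sgn}(\pi)$, all bad pairs cancel, leaving only contributions from walks $w$ that never touch a diagonal wall, i.e.\ genuine $W_k$-walks. Since $W_k$ is a strict fundamental domain for the $S_k$-action and $\mu \in W_k$ has distinct coordinates, the endpoint $\pi(\mu)$ lies in $W_k$ only for $\pi = \mathrm{id}$, giving exactly $w_k(\lambda, \mu, n)$.

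The main obstacle is the precise bookkeeping in the involution, in particular the invariance of the pair $(t, j)$ under the reflection — this is where $w(t) \in H_j$ being fixed by $s_j$ is essential. A subtlety specific to hesitating walks is that the two substeps of a single pair may temporarily leave $W_k$ even when both endpoints lie in it, so the notion of first contact must be taken at pair-endpoints $0, 2, \dots, 2n$; one must then check that reflecting an entire suffix of pairs produces a sequence still built from the three admissible pair-types, which follows from the permutation invariance of the pair structure recorded at the start. Everything else is a careful packaging of the standard reflection argument.
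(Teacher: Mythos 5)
Your overall strategy — a Gessel--Zeilberger reflection with a sign-reversing involution on pairs $(\pi,w)$, with the $W_k$-walks surviving as fixed points — is exactly the argument the paper alludes to (it only cites Bousquet-M\'elou and Xin and describes the proof in one sentence). However, your execution has a genuine gap in the choice of times at which wall contact is detected. You restrict the involution to first contact at \emph{pair-endpoints} $0,2,\dots,2n$, and then conclude that the fixed points are walks that ``never touch a diagonal wall, i.e.\ genuine $W_k$-walks.'' That inference fails: a $Q_k$-hesitating walk can touch or even cross a wall $x_i=x_{i+1}$ only at an \emph{odd} (mid-pair) time. For instance, the type-(iii) pair $(e_{i+1},-e_i)$ applied where $x_i-x_{i+1}=1$ moves the difference $1\to 0\to -1$, crossing the wall while both pair-endpoints avoid it; likewise $(e_{i+1},-e_{i+1})$ from such a point touches the wall only at the intermediate step. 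Such walks are fixed points of your involution but are not $W_k$-walks (the paper's convention, forced by the bijection with hesitating tableaux where every intermediate diagram must be a valid Ferrers shape of height at most $k$, and confirmed by Table~1: there are only two walks of length $4$ ending at $(2,1)$, so $(2,1)\to(2,2)\to(2,1)$ is not admissible). Hence your signed sum does not reduce to $w_k(\lambda,\mu,n)$. Your stated reason for restricting to pair-endpoints --- that a legitimate $W_k$-walk might leave $W_k$ mid-pair --- is precisely the situation the definition excludes, so the restriction solves a non-problem while creating a real one.

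The repair is to run the involution at the level of individual steps: take $t$ to be the first of the $2n+1$ lattice times at which the walk lies on some $H_j$. Since each single step $\pm e_i$ or $0$ changes $x_i-x_{i+1}$ by at most $1$ and $\lambda\in W_k$, a walk that never touches a wall never crosses one, so the fixed points really are $W_k$-walks. One then checks, as you did for whole pairs, that reflecting a suffix beginning \emph{mid-pair} still yields admissible pairs: only the second substep of the affected pair is permuted, and $(0,e_i)\mapsto(0,e_{\sigma(i)})$, $(-e_i,0)\mapsto(-e_i,0)$, $(e_i,-e_j)\mapsto(e_i,-e_{\sigma(j)})$ all remain of the required types. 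With that modification the rest of your bookkeeping (invariance of the contact time and wall index, the sign flip, and the identification of the surviving endpoint $\pi(\mu)\in W_k$ with $\pi=\mathrm{id}$) goes through verbatim.
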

The result is proved by a classic reflection argument using a simple
sign reversing involution between pairs of walks; the walks restricted
to $W_k$ appear as fixed points.
We restrict to $k=2$, and define
\[
H(x;t) = \sum_{i,n} q_2((2,1), (i,1), 2n) x^it^n\quad \text{and}\quad 
V(y;t) = \sum_{i,n} q_2((2,1), (1,i), 2n) y^it^n.
\]
By applying the proposition we see immediatetly that
\begin{equation}
\label{eqn:hv}
W(x,t)=\sum_{i,n} w_2((2,1), (i,1), 2n)\, x^it^n = H(x;t)-V(x;t).
\end{equation}
Their explicit Proposition~13 is key to our solution. 
\begin{proposition}[Bousquset-M\'elou and Xin, Proposition~13
  of~\cite{BoXi05}]
\label{thm:BX}
  The series $H(x;t)$ and $V(y;t)$ which count $Q_2$-hesitating walks
  of even length ending on the $x$-axis and on the $y$-axis, satisfy
\begin{eqnarray}
xH(x;t) &= PT_x \frac{Y}{t(1+x)} (x^2-Y^2/x^2+Y/x^3)\\
\overline{x}^2V(\overline{x};t)&= NT_x \frac{Y}{t(1+x)} (x^2-Y^2/x^2+Y/x^3),
\end{eqnarray}
where $Y$ is the algebraic function
% defined as follows. Let $K(x,y)=xy-t(1+x)(1+y)(x+y)$ and
% let $Y$ a root of $K(x,y)$ viewed as a quadratic polynomial in~$y$,
% specifically, the root which is a formal series in $t$:
\[Y= \frac{-tx^2+(1-2t)x -t\sqrt{{t}^{2}
{x}^{4}-2\,t{x}^{3}+ \left( -2\,{t}^{2}-4\,t+1 \right) {x}^{2}-
2\,tx+{t}^{2}}}{2t\left( 1+x \right) }.\]
\end{proposition}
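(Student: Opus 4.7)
The plan is to prove both identities by a single application of the kernel method. First, I would introduce the bivariate generating function $Q(x,y;t) = \sum_{i,j,n} q_2\bigl((2,1),(i,j),2n\bigr)\, x^i y^j t^n$ enumerating $Q_2$-hesitating walks. Listing the paired step-types $(\text{stay}, +e_i)$, $(-e_i, \text{stay})$, and $(+e_i, -e_j)$ produces the step polynomial $S(x,y) = x + y + \ox + \oy + 2 + x\oy + \ox y$. Subtracting off those transitions that would cause the walk to exit $Q_2$, whether at the intermediate or at the terminal half-step of a pair, yields a kernel equation
\[
K(x,y;t)\,Q(x,y;t) \;=\; x^2 y \;-\; t\bigl[(1+x)\,H(x;t) \,+\, (1+y)\,V(y;t)\bigr],
\]
where $K(x,y;t) = 1 - t S(x,y)$. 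The crucial feature of this form is that the unknown boundary data consists of exactly the two series $H$ and $V$ sought in the proposition.

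Next I would clear denominators so $xy K$ becomes a polynomial quadratic in $y$. By Vieta its two roots have product $x$; the branch vanishing at $t = 0$ lies in $t\,\mathbb{Q}[x,\ox][[t]]$ and agrees with the algebraic function $Y(x;t)$ displayed in the proposition, as a direct discriminant comparison confirms. Substituting $y = Y$ annihilates the left-hand side and produces the single identity
\[
(1+x)\,H(x;t) \;+\; (1+Y)\,V(Y;t) \;=\; \frac{x^2 Y}{t}.
\]
Because $Y = O(t)$, the composition $V(Y;t)$ is a well-defined element of $\mathbb{Q}[x,\ox][[t]]$. Using the quadratic relation for $Y$ together with the Vieta companion $Y' = x/Y$ to rewrite $V(Y;t)$ as a Laurent series supported on negative powers of $x$, I expect this identity to reorganize into a single Laurent-series equation whose positive part in $x$ equals $xH(x;t)$ and whose negative part equals $\ox^2 V(\ox;t)$, with combined right-hand side $\frac{Y}{t(1+x)}\bigl(x^2 - Y^2/x^2 + Y/x^3\bigr)$. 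The monomial supports are indeed disjoint, since $xH(x;t) \in x^2\mathbb{Q}[x][[t]]$ while $\ox^2 V(\ox;t) \in \ox^3\mathbb{Q}[\ox][[t]]$, so applying $PT_x$ and $NT_x$ simultaneously peels off the two displayed formulas.

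The main obstacle is the algebraic rearrangement in the final step: although the kernel method itself is standard, showing that the $V$-contribution lands cleanly on negative $x$-powers — so that the $PT_x/NT_x$ split is unambiguous — demands careful use of the quadratic relation satisfied by $Y$ and tracking of the two Vieta roots. A secondary delicate point is the initial setup, since the paired nature of hesitating steps means a walk can violate $Q_2$ either mid-pair (through a $-e_i$ from a coordinate-$1$ position) or at the pair's endpoint (through a coordinate-$0$ endpoint); both cases must be enumerated to obtain the clean $(1+x)H + (1+y)V$ boundary structure that drives the entire calculation.
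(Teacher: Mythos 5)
First, a point of comparison: the paper does not prove this statement at all --- it is quoted verbatim as Proposition~13 of Bousquet-M\'elou and Xin~\cite{BoXi05} and used as a black box, so the only proof to measure your attempt against is the one in that reference. Your proposal follows the same general route as the source (a kernel equation for the quadrant walks, followed by root substitution), and your setup is essentially correct: the step polynomial $S(x,y)=x+y+\ox+\oy+2+x\oy+\ox y$, the initial term $x^2y$, and the boundary corrections $t(1+x)H+t(1+y)V$ all check out (there is no double counting at the corner because each step pair contains at most one negative unit step), as does the identification of $Y$ as the root of $xyK$ that vanishes at $t=0$, with Vieta companion $x/Y$.

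The genuine gap is in your last step, which is where the entire content of the proposition lives. Substituting $y=Y$ produces a single equation, $(1+x)H(x;t)+(1+Y)V(Y;t)=x^2Y/t$, in the two unknown univariate series $H$ and $V$; no rearrangement of this one identity can separate them. Moreover, your claim that $V(Y;t)$ can be rewritten as a Laurent series supported on negative powers of $x$ is false as stated: since $Y=t(1+x)(Y+1)(1+\ox Y)$, the coefficients of $V(Y;t)$ as a series in $t$ are Laurent polynomials in $x$ containing powers of both signs. What Bousquet-M\'elou and Xin actually do is the \emph{obstinate} kernel method: the kernel satisfies $S(x,y)=S(y,x)=S(x,x/y)$, so one substitutes the whole orbit of $(x,Y)$ under the group generated by $(x,y)\mapsto(y,x)$ and $(x,y)\mapsto(x,x/y)$ and forms an alternating combination of the resulting equations in which the unwanted terms cancel, leaving precisely $G=\frac{Y}{t(1+x)}\left(x^2-Y^2/x^2+Y/x^3\right)$ equal to $xH(x;t)+\ox^2V(\ox;t)$ plus a series free of $x$; only then do $PT_x$ and $NT_x$ peel off the two displayed formulas, using (as you correctly observe) that $xH$ is supported on $x^{\geq 2}$ and $\ox^2V(\ox)$ on $x^{\leq -3}$. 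Without constructing that orbit combination, your ``I expect this identity to reorganize'' is exactly the statement that needs to be proved. A minor remark: the discriminant comparison you propose would also reveal that the displayed formula for $Y$ contains a transcription slip --- the numerator should read $-tx^2+(1-2t)x-t-\sqrt{\cdots}$ rather than $-tx^2+(1-2t)x-t\sqrt{\cdots}$.
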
%
Using this, we can express $W(x,t)$ as a diagonal.
% In the updated draft, the definition for the diagonal is now immediately
% before this \footnote{Recall that the diagonal of a
%  multivariate formal power series is $\Delta \sum f(i_1, \dots, i_d)
%  x_1^{i_1}x_2^{i_2}\dots x_d^{i_d}=\sum_n f(n, n, \dots, n) x^n$. } 
Define
\begin{equation}
G(x,t)=\frac{Y}{t(1+x)}\left(x^2-Y^2/x^2+Y/x^3\right).
\end{equation}

\begin{lemma}
\label{thm:setup}
The generating function for the class of walks of $W_2$-hesitating
walks of even length end on the $x$-axis defined
\[
W(t)=\sum_{i,n} w_2 ((2,1), (i,1), n) t^n 
\]
satisfies
\begin{equation}
W(t) = \Delta \left( \left(G\left(\frac{1}{x},xt\right)-G(x,xt)\right)
  \frac{1}{1-x} \right).
\end{equation}
\end{lemma}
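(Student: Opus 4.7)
The plan is to peel off the diagonal using the integral formula noted earlier in the paper, then match the resulting $[x^0]$-extraction with $H(1,t)-V(1,t)$. Because $\Delta F(x,t) = [x^0]\,F(x, t/x)$, substituting $t \mapsto t/x$ into the right-hand side replaces $G(1/x, xt)$ by $G(1/x, t)$ and $G(x, xt)$ by $G(x, t)$, so the problem reduces to showing
$$[x^0]\,\frac{G(1/x,t) - G(x,t)}{1-x} = W(t).$$

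First I would compute $W(t)$ from the reflection identity $W(x,t) = H(x,t) - V(x,t)$ of~\eqref{eqn:hv}. Writing $G(x,t) = \sum_{a,b} G_{a,b}\,x^a t^b$ as a formal Laurent series in $x$ and power series in $t$, the identity $xH(x;t)=PT_x\,G(x,t)$ of Proposition~\ref{thm:BX} gives $H(x,t) = \sum_{a>0,\,b} G_{a,b}\,x^{a-1} t^b$, and hence $H(1,t) = \sum_b \bigl(\sum_{a>0} G_{a,b}\bigr) t^b$. An analogous manipulation of $\overline{x}^2 V(\overline{x};t)=NT_x\,G(x,t)$ yields $V(\overline{x},t) = \sum_{a<0,\,b} G_{a,b}\,x^{a+2} t^b$, hence $V(1,t) = \sum_b \bigl(\sum_{a<0} G_{a,b}\bigr) t^b$. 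Evaluation at $x=1$ is legitimate because for each fixed length only finitely many endpoints are reachable, so $[t^n] H(x,t)$ and $[t^n] V(x,t)$ are polynomials in $x$. Subtracting yields
$$W(t) = \sum_{b} \Bigl(\sum_{a>0} G_{a,b} \;-\; \sum_{a<0} G_{a,b}\Bigr) t^b.$$

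Second I would evaluate the $[x^0]$-extraction directly by expanding $\tfrac{1}{1-x}=\sum_{k\geq 0}x^k$ formally. For $a>0$ one has
$$\frac{x^{-a}-x^a}{1-x} = x^{-a} + x^{-a+1} + \cdots + x^{a-1},$$
whose coefficient at $x^0$ equals $1$; the case $a<0$ is the negative of this, and the $a=0$ case vanishes identically. Consequently the coefficient of $x^0$ in the summand $G_{a,b}(x^{-a}-x^a)/(1-x)$ is $+G_{a,b}$ for $a>0$, $-G_{a,b}$ for $a<0$, and $0$ for $a=0$, and summing over $(a,b)$ reproduces exactly the formula for $W(t)$ derived above.

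The main thing to verify, rather than a genuine obstacle, is that these formal manipulations are valid: for each fixed $b$, the support $\{a : G_{a,b}\neq 0\}$ must be finite, so that the inner sums over $a$ are finite and the operators $[x^0]$, $PT_x$, and $NT_x$ commute with the summations. This follows from the explicit algebraic form of $Y$ in Proposition~\ref{thm:BX}, which implies that $[t^b] G(x,t)$ is a Laurent polynomial in $x$ for every $b\geq 0$, so that all series live in $\mathbb{Q}[x,\overline{x}][\![t]\!]$ and the substitution $t \mapsto t/x$ makes sense coefficient by coefficient.
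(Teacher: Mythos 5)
Your argument is correct and follows essentially the same route as the paper: write $W(t)=H(1;t)-V(1;t)$ via Equation~\ref{eqn:hv}, apply Proposition~\ref{thm:BX}, and convert the $PT_x$/$NT_x$ extractions evaluated at $x=1$ into a diagonal. The paper dismisses this conversion as ``straightforward series manipulation,'' and your explicit $[x^0]$ computation of $(x^{-a}-x^a)/(1-x)$ is precisely that manipulation, carried out on the difference so that the $a=0$ contributions visibly cancel.
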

\begin{proof} We remark that $W(t)=H(1;t)-V(1;t)$ by
  Equation~\ref{eqn:hv}. We apply Proposition~\ref{thm:BX}, and
  then convert $PT$ and $NT$ operators to diagonal operations by
straightforward series manipulation:
\begin{equation}
H(1;t)=\left.PT_x G(x,t)\right|_{x=1} = \Delta \left(G\left(\frac{1}{x},xt\right) \frac{1}{1-x} \right),\qquad V(1;t)= \Delta \left(G(x,xt) \frac{1}{1-x}\right).
\end{equation}
% \begin{equation}
%
% \end{equation}
\end{proof}

\begin{theorem}\label{thm:baxter}
 Let $B_n$ be the number of Baxter permutations of size
  $n$. Then $W(t)=\sum B_{n+1} t^n$.
\end{theorem}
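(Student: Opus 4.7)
The plan is to prove Theorem~\ref{thm:baxter} by establishing that $W(t)$ satisfies the same linear differential equation as the shifted Baxter generating function $\sum_{n\geq 0}B_{n+1}t^n$, and then matching enough initial coefficients to pin down the solution uniquely.

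First I would observe that the function $Y$ appearing in Proposition~\ref{thm:BX} is the root of an explicit quadratic in $(x,t)$, so $G(x,t)$ is algebraic in $(x,t)$, and consequently the integrand
\[
F(x,t)\;=\;\frac{1}{1-x}\Bigl(G(1/x,xt)-G(x,xt)\Bigr)
\]
from Lemma~\ref{thm:setup} is algebraic in $(x,t)$, in particular D-finite. By Lipshitz's theorem the diagonal $W(t)=\Delta F(x,t)$ is therefore D-finite in $t$, so there exists a linear differential equation with polynomial coefficients in $t$ annihilating $W(t)$.

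Second, I would compute this differential equation explicitly via creative telescoping. Feeding the contour-integral representation
\[
W(t)\;=\;\frac{1}{2\pi i}\oint_\Omega \frac{F(x,t/x)}{x}\,dx
\]
into an implementation such as Koutschan's \textsf{HolonomicFunctions} or Chyzak's \textsf{Mgfun} produces a telescoper which, after clearing any spurious polynomial content, I expect to coincide with the order-two operator associated with the Baxter recurrence~\eqref{eqn:baxrec}, shifted by one in the index to account for the offset $B_{n+1}$ rather than $B_n$. Equivalently, one translates~\eqref{eqn:baxrec} into the corresponding ODE for $\sum B_{n+1}t^n$ and verifies that the telescoper annihilates $W(t)$. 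The argument is then completed by a direct enumeration of short $W_2$-hesitating walks ending on the $x$-axis, which yields $[t^0]W(t)=1=B_1$, $[t^1]W(t)=2=B_2$, $[t^2]W(t)=6=B_3$: this is enough to pin down the unique solution of a second-order recurrence.

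The principal obstacle is the creative telescoping step itself. Because $Y$ carries a square root, the algorithm must work over a quadratic algebraic extension and intermediate polynomial expressions can swell considerably; moreover the change of variables $t\mapsto t/x$ combined with both $G(1/x,xt)$ and $G(x,xt)$ inflates the $x$-support of the integrand symmetrically, which further increases the ansatz size. The delicate point is not the existence of a telescoper --- guaranteed by Lipshitz --- but rather confirming that the operator returned actually equals (or is a left divisor of) the Baxter operator, rather than containing an extraneous polynomial factor introducing spurious singularities. A right-factorization or GCD reduction of the raw telescoper with the known Baxter operator may be required to isolate the minimal order-two component before the comparison with~\eqref{eqn:baxrec} can be made.
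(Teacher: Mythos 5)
Your overall strategy is the same as the paper's: use the diagonal representation of $W(t)$ from Lemma~\ref{thm:setup}, run creative telescoping (the paper uses Koutschan's package) to get an annihilating operator, observe that the shifted Baxter series satisfies the same equation via \eqref{eqn:baxrec}, and finish by comparing initial coefficients. One step of your plan needs repair, though. The telescoper that the computation actually returns is not the order-two operator encoding \eqref{eqn:baxrec}: the paper obtains an operator $\mathcal{L}$ of order five. Your proposed fix --- right-factoring or GCD-reducing the raw telescoper against the known Baxter operator so as to isolate the minimal order-two component --- does not give you what you need: if $\mathcal{L}=\mathcal{L}_1\cdot\mathcal{L}_{\mathrm{Bax}}$, then a solution of $\mathcal{L}$ need not be a solution of the right factor $\mathcal{L}_{\mathrm{Bax}}$, so you cannot conclude that the coefficients of $W(t)$ satisfy the second-order Baxter recurrence, and matching only $B_1,B_2,B_3$ is then insufficient. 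The implication runs the other way: the Baxter series satisfies $\mathcal{L}$ because its minimal operator right-divides $\mathcal{L}$ (or by direct verification from \eqref{eqn:baxrec}), both series lie in the solution space of $\mathcal{L}$, which has dimension five, and so one must match at least five initial coefficients (the paper checks over two hundred, which also guards against the subtlety that five coefficients only suffice when the integer indicial roots cooperate). With that adjustment --- keep the order-five operator and compare enough initial terms --- your argument becomes the paper's proof.
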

\begin{proof}
  We prove this using effective computations for D-finite closure
  properties.  Using the Mathematica package of
  Koutschan~\cite{Kout10} we use our expression for $W(t)$ as a
  diagonal to calculate that it satisfies the differential equation
  $\mathcal{L} \cdot W(t) =0$, where $\mathcal{L}$ is the differential
  operator
\begin{align*}
\mathcal{L} := & t^4 \left( t+1 \right)  \left( 8\,t-1 \right) {{\it D_t}}^{5}+{t}^{
3} \left( -20+147\,t+176\,{t}^{2} \right) {{\it D_t}}^{4} +4\,{t}^{2}
 \left( -30+241\,t+304\,{t}^{2} \right){{\it D_t}}^{3}  \\
&+12\,t \left( -
20+191\,t+256\,{t}^{2} \right) {{\it D_t}}^{2}
+24\, \left( -5+72\,t+104
\,{t}^{2} \right) {\it D_t}+240+384\,t.
\end{align*}

Furthermore, the Equation~\ref{eqn:baxrec} implies that the
shifted generating function for the Baxter numbers written above also
satisfies this differential equation. As the solution space of the
differential operator $\mathcal{L}$ is a vector space of dimension 5;
to prove equality it is sufficient to show that the first five terms
of the generating functions are equal. We have verified that the
initial terms in the series agree for over 200 terms.
\end{proof}

%---------------------------------------------------------------
\subsection{A new generating tree}
\label{sec:GeneratingTree}
%----------------------------------------------------------------
A \emph{generating tree} for a combinatorial class expresses recursive
structure in a rooted plane tree with labelled nodes. The objects of
size~$n$ are each uniquely generated, and the set of objects of size $n$
comprise the $n^\text{th}$ level of the tree. They are useful for
enumeration, and for showing that two classes are in bijection. One
consequence of Theorem~\ref{thm:Baxter} is a new generating tree
construction for Baxter objects.

Several different formalisms exist for generating trees,
notably~\cite{BaBoDeFlGaGo02}. The central properties are as
follows. Every object $\gamma$ in a combinatorial class $\mathcal{C}$ is
assigned a label $\ell(\gamma)\in\mathbb{Z}^k$, for some fixed
$k$. There is a rewriting rule on these labels with the property that
if two nodes have the same label then the ordered list of labels of
their children is also the same.  We consider labels that are pairs of
positive integers, specified by~$\{\ell_\text{Root}: [i,j]
\rightarrow \operatorname{Succ}([i,j])\}$, where $\ell_\text{Root}$
is the label of the root.

Two generating trees for Baxter objects are known in the literature,
and one consequence of Theorem~\ref{thm:Main} is a third, using the
generating tree for $\Omega^{(k)}$ given by Burrill~\emph{et al.}~\cite{Buetal12}. This
tree differs from the other two already at the third level,
illustrating a very different decomposition of the objects.  For the
three different systems, we give the succession rules, and the first 5
levels of the tree (unlabelled) in Figure~\ref{fig:trees}.
\begin{figure}\center
\begin{subfigure}[b]{0.45\textwidth}
                \includegraphics[width=\textwidth, height=2cm]{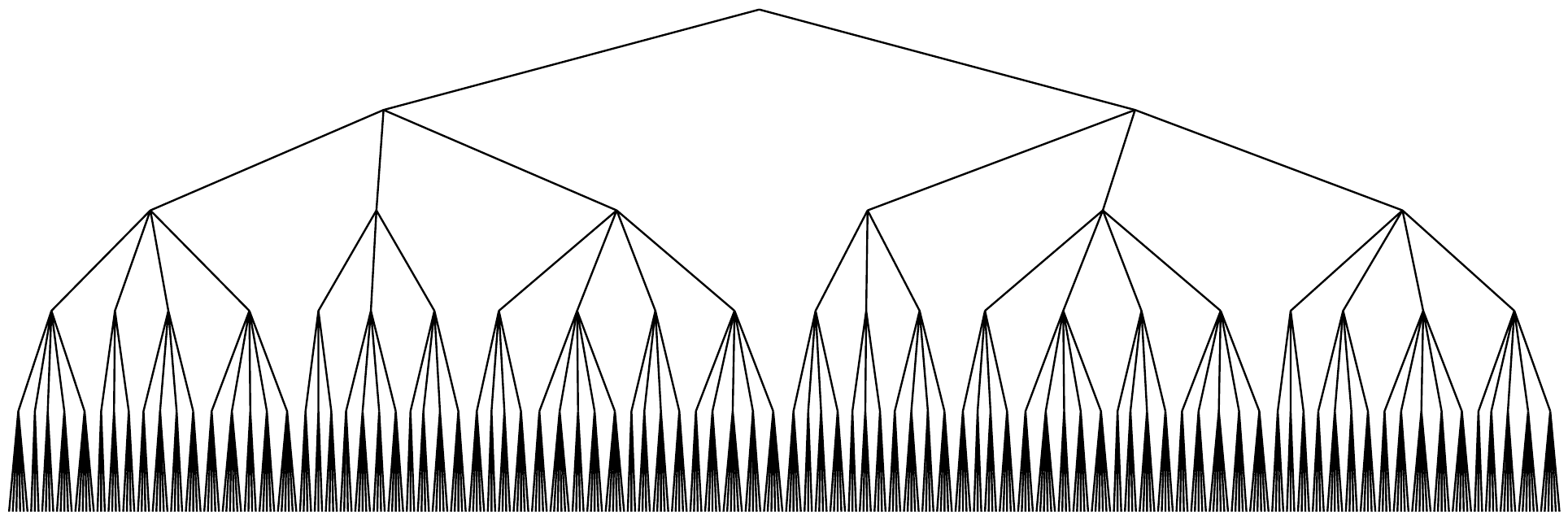}

{\tiny $\{[1,1];[i,j]\rightarrow [1,j+1], \dots, [i, j+1], [i+1, j], \dots [i+1, 1]\}$}
%                \caption{Generating tree of~\cite{BoBoFu10}}
%                \label{fig:bobofu}
\end{subfigure}\begin{subfigure}[b]{0.45\textwidth}
                \includegraphics[width=\textwidth, height=2cm]{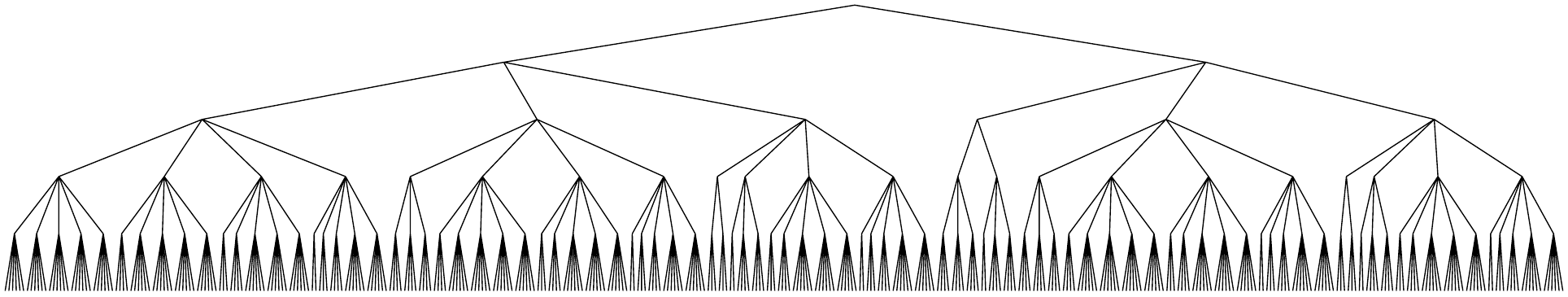}

{\tiny$\{[0,2];
[i,j]\rightarrow [0, j], \dots , [i-1, j], [1, j+1], \dots, [i+j-1, 2]\}$}
%\caption{Generating tree of~\cite{BoGu14}}
%                \label{fig:bogu}
\end{subfigure}

\begin{subfigure}[b]{0.45\textwidth}
                \includegraphics[width=\textwidth,height=2cm]{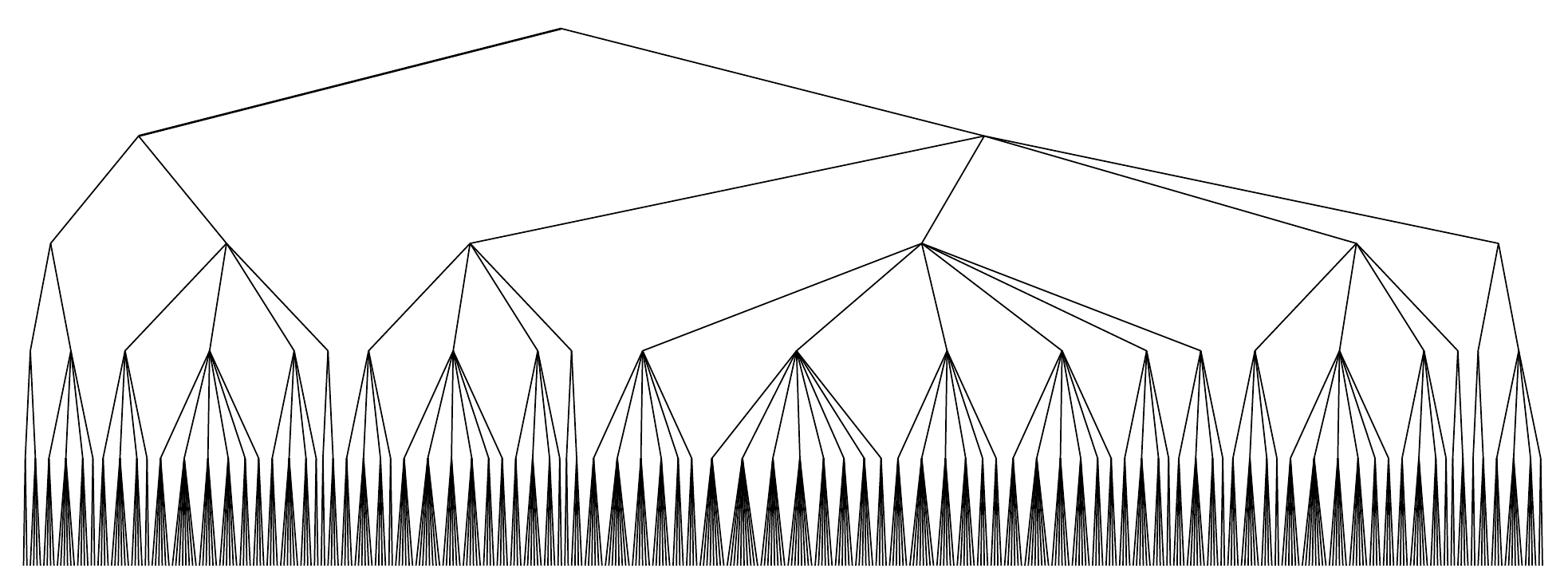}
{\tiny \[
\begin{array}{rll}
\{[0,0];[i,j]\rightarrow
&[i,i], [i +1,j]\\
&[i,j],[i,j+1],\dots,[i,i-1], &\mathrm{if}\, i>0\\
&[i-1,j],[i-1,j+1],\dots,[i-1,i-1], &\mathrm{if}\,i>0 \\ 
&[i , j -1], [i-1, j-1]&\mathrm{if}\, i > 0, \mathrm{and}\,  j > 0\}.\\
\end{array} \]}

%\caption{Generating tree of~\cite{Buetal12} }
%                \label{fig:bemy}
\end{subfigure}

 \caption{The first five levels of each of the Baxter generating
   tree. They are respectively from~\cite{BoBoFu10}~\cite{BoGu14}~\cite{Buetal12}. }
\label{fig:trees}
\end{figure}

%----------------------------------------------------------------------
\section{Standard Young tableaux of bounded height}
\label{sec:Conjecture}
%----------------------------------------------------------------------
Chen \emph{et al.}~\cite{Chetal07} give an analogy comparing the
relationship between oscillating tableaux and irreducible
representations of the Brauer algebra to the relationship of standard
Young tableaux and the symmetric group. Indeed, there are many connections
between standard Young tableaux of bounded height and oscillating
tableaux of bounded height, but we were unable to find any
consideration of the following conjecture. 
%-------------------------------------
\begin{conjecture}
  The set of oscillating lattice walks of length~$n$ in $W_k$ ending
  at the boundary $\{(m+k, k-1, \dots, 1 ): m\geq 0\}$ is in bijection
  with the set of standard Young tableaux of size~$n$, of height
  bounded by~$2k$.
\end{conjecture}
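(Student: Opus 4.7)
The plan is to mimic the generating function analysis used in the proof of Theorem~\ref{thm:Baxter}, applied in parallel on both sides of the conjectured bijection. On one side, I would work with the summed generating function
\[
O_k(t) = \sum_n \left( \sum_{m \geq 0} |\mathcal{O}^{(k)}_{n,m}| \right) t^n,
\]
and on the other side with the generating function $Y_{2k}(t)$ for standard Young tableaux of size $n$ and height bounded by $2k$. The goal is to exhibit, for each $k \leq 8$, a common linear differential operator annihilating both series and to match enough initial coefficients to force equality.

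First I would realize $O_k(t)$ as a D-finite series through the reflection principle. A $W_k$-oscillating walk is a walk in the Weyl chamber with steps $\pm e_i$; by the Gessel--Zeilberger/Bousquet-M\'elou--Xin reflection argument recalled in the excerpt, the number of such walks from $\delta$ to $\mu$ is an alternating sum over $S_k$ of counts of free walks from $\delta$ to $\pi(\mu)$. Each free-walk count is a straightforward coefficient in $(x_1 + \overline{x}_1 + \cdots + x_k + \overline{x}_k)^n$, hence a diagonal of a rational function in $(k+1)$ variables. Summing over the boundary $\{(m+k, k-1, \dots, 1) : m \geq 0\}$ introduces only a geometric factor $1/(1-x_1)$ inside the diagonal, preserving rationality. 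Thus $O_k(t)$ is expressible as the diagonal of an explicit rational function, and by Lipshitz's theorem it is D-finite. Using Koutschan's \textsf{HolonomicFunctions} (as in the proof of Theorem~\ref{thm:baxter}), I would compute, for each $k$ in turn, an explicit linear ODE $\mathcal{L}_k \cdot O_k(t) = 0$.

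Next I would produce a linear ODE annihilating $Y_{2k}(t)$. Here the combinatorics is cleaner: bounded-height standard Young tableaux of size $n$ correspond, via RSK, to walks in $W_{2k}$ using only the positive steps $e_i$, started at $\delta$, of length $n$. The reflection principle again gives a signed-sum formula, which Gessel (and later authors) rewrote as a determinant of Bessel-like hypergeometric series; these are classically D-finite, and the corresponding annihilating ODE can be extracted either from the determinantal formula or, in parallel to the first step, by creative telescoping applied to the diagonal representation of $Y_{2k}(t)$. Having both $\mathcal{L}_k$ and an annihilating operator for $Y_{2k}$, I would verify that they share a common right factor $\mathcal{M}_k$, and that $\mathcal{M}_k \cdot O_k(t) = \mathcal{M}_k \cdot Y_{2k}(t) = 0$. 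Because the solution space of $\mathcal{M}_k$ is finite-dimensional, equality then reduces to matching initial coefficients, which I would do by computing both sequences up to, say, a hundred terms more than the order of $\mathcal{M}_k$.

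The main obstacle will be computational cost: as $k$ grows, the diagonal representation of $O_k(t)$ has more variables, the signed sums over $S_k$ and $S_{2k}$ grow factorially, and the creative-telescoping elimination becomes prohibitively expensive. This is precisely what caps the argument at $k \leq 8$ rather than giving a uniform proof. A conceptual obstacle lurks behind the computational one: neither the reflection-principle expansion for $O_k$ nor the Gessel determinant for $Y_{2k}$ manifestly produces the \emph{same} ODE, so at present the equality $O_k = Y_{2k}$ appears as an accidental coincidence of differential operators rather than the shadow of a bijection. Finding either a uniform ODE identity valid for all $k$, or an explicit combinatorial bijection between $W_k$-oscillating walks ending on the boundary and SYT of height at most $2k$, remains the genuinely difficult step and would be the natural route to a proof for all $k$.
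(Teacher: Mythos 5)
First, note that the statement you are proving is stated in the paper as a conjecture: the paper does not prove it in general, and only verifies it for $k\leq 8$. Your strategy is viable for such a finite verification, but it is not the route the paper actually takes. The paper's verification compares two \emph{exact closed forms} for the exponential generating functions: the classical Gordon--Houten--Bender--Knuth determinant $\tilde{Y}_{2k}(t)=\det[b_{i-j}(t)+b_{i+j-1}(t)]_{1\leq i,j\leq k}$ for bounded-height SYT, and the Grabiner--Magyar determinant for Weyl-chamber oscillating walks, summed over the endpoints $me_1+\delta$ (the infinite sum telescopes using $b_{-j}=b_j$). Both sides become explicit polynomials in the Bessel functions $b_j$, and checking the conjecture for a given $k$ reduces to checking that two such polynomial expressions coincide identically --- an exact symbolic computation with no ODEs, no initial-term matching, and essentially no cost up to $k=8$. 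Your creative-telescoping plan corresponds to the paper's separate ``diagonal approach'' (Section~\ref{sec:diag}), which the authors present as a source of further structure but explicitly leave partly conjectural; it would be far more expensive (the SYT side lives in up to $2k=16$ variables) and is not what caps the paper at $k\leq 8$.

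Two technical points in your write-up need repair even on its own terms. First, for walks with steps $\pm e_i$ confined to $W_k=\{x_1>\dots>x_k>0\}$, reducing to \emph{free}-walk counts via reflection requires the full hyperoctahedral group $B_k$ of order $2^k k!$ (this is the Gessel--Zeilberger setting the paper invokes for Theorem~\ref{thm:osc}); the alternating sum over $S_k$ that you quote from Bousquet-M\'elou--Xin only reflects the Weyl chamber down to the orthant $Q_k$, whose walk counts are not coefficients of $(x_1+\overline{x}_1+\cdots+x_k+\overline{x}_k)^n$. Second, ``match as many initial coefficients as the order of the common operator'' is not sufficient in general, because $t=0$ is typically a singular point of the operators produced by creative telescoping; the dimension of the space of formal power series solutions is governed by the indicial polynomial, not by the order alone. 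The safe statement (which the paper's own Baxter argument implicitly relies on by checking 200 terms) is to match enough terms to exceed the largest nonnegative integer root of the indicial equation at $0$.
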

%-------------------------------------
The case of $k=1$ is straightforward. An oscillating walk in~$W_1$ is
simply a sequence of $e_1$ and $-e_1$ steps such that at any point the
number of $e_1$ is greater than or equal to the number of
$-e_1$. There is a simple bijection to standard Young tableaux of
height 2: Given an oscillating walk as a sequence of steps,
$w=w_1,w_2,\dots, w_n$, the standard Young tableaux is filled by
putting entry~$j$ on the top row of the tableau if $w_j=e_1$, and on
the second row if $w_j=-e_1$. However, the interaction between the
steps is less straightforward in higher dimensions.

Eric Fusy remarked to us that the inverse RSK bijection maps standard
Young tableaux to involutions, as the images of pairs of identical
tableaux. This does not directly give our matching diagrams, but it is
a promising candidate for a bijection.

To give evidence for this conjecture, we first give expressions for
the exponential generating functions using determinants of matrices
filled with Bessel functions. For any $k$ the equivalence could
 be verified in this manner, and we have done so for $k\leq 8$. It is possible
that very explicit manipulations might also give an answer.

Next we express Young tableaux as walks, and use standard
generating function techniques for the ordinary generating
functions. This gives us a different characterization of the bijection
in terms of walks, and two expressions for the generating functions as
diagonals of rational functions. The technical details of the
constructions are reserved for the long version of this article. 

\subsection{A determinant approach}
Our first approach to settling this conjecture is a direct appeal
to two recent enumerative results. In this section, 
$b_j(x)=I_j(2x)=\sum_n\frac{(2x)^{2n+j}}{n!(n+j)!}$, the hyperbolic
Bessel function of the first kind of order~$j$.

Let the $\tilde{Y}_{k}(t)$ be the exponential generating function for
the class of standard Young tableaux with height bounded by
$k$. Formulas for $\tilde{Y}_k(t)$ follow from works of Gordon,
Houten, Bender and Knuth~\cite{Gord71, GoHo68, BeKn72}, which depend
on the parity of~$k$. We are only interested in the even values here.
\begin{theorem}[~\cite{Gord71, GoHo68, BeKn72}]
The exponential generating function for the class of standard Young
tableaux of height bounded by~$2k$ is given by
\[
\tilde{Y}_{2k}(t)= \det[b_{i-j}(t)+b_{i+j-1}(t)]_{1\leq i,j\leq k}.
\]
\end{theorem}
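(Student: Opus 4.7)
The plan is to combine the RSK correspondence with a reflection principle on walks in a Weyl chamber, reducing the counting problem to a determinantal identity for unconstrained Bessel-type walks. First, RSK identifies standard Young tableaux of size $n$ with involutions of $[n]$ (a single SYT $P$ corresponds to the involution whose RSK image is the diagonal pair $(P,P)$), and the height of $P$ equals the length of the longest decreasing subsequence of the involution. Hence $n!\,[t^n]\,\tilde{Y}_{2k}(t)$ counts involutions of $[n]$ whose longest decreasing subsequence has length at most $2k$. Via the Sundaram variant of RSK, such involutions are in bijection with oscillating tableaux of length $n$ from $\emptyset$ to $\emptyset$ of height at most $2k$. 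Applying the shift $\lambda\mapsto\lambda+\delta$ with $\delta=(2k,2k-1,\dots,1)$ converts these tableaux into lattice walks in the strict Weyl chamber $W_{2k}$ with steps $\pm e_i$ starting and ending at $\delta$.

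Second, I would apply the Gessel--Zeilberger reflection principle to express the number of such walks as a signed sum over the symmetric group $S_{2k}$ of unconstrained walk counts in $\mathbb{Z}^{2k}$ ending at permuted targets. Because the step set decomposes as a product of $\pm e_i$'s, the EGF for unconstrained walks factors coordinate-wise, and the one-dimensional EGF from position $a$ to position $b$ is exactly $b_{|a-b|}(t)$. Collecting the signed contributions yields a $2k\times 2k$ determinant of the shape $\det[b_{\pi(i)-i}(t)]$, which after the standard identification with a Lindström--Gessel--Viennot matrix becomes $\det[b_{j-i}(t)]_{1\le i,j\le 2k}$.

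The final and most delicate step is to fold this $2k\times 2k$ determinant into the $k\times k$ determinant of the statement. The boundary point $\delta$ is invariant under the involution $i\mapsto 2k+1-i$, and this symmetry lets one pair rows $i$ and $2k+1-i$ of the matrix and perform appropriate row additions so that each pair collapses to a single row of length $k$ with entries of the form $b_{i-j}(t)+b_{i+j-1}(t)$. Conceptually this is the reduction of the $\operatorname{SL}_{2k}$ Weyl denominator identity (staircase weight $\delta$) to the symplectic $\operatorname{Sp}_{2k}$ denominator identity, and it explains both the $+$ sign and the shift by $1$ in $b_{i+j-1}$. I expect the sign bookkeeping and the precise index shift in this fold to be the main technical obstacle; once the correct paired row operations are chosen, the remainder is a routine computation that matches the classical formulas of Gordon--Houten and Bender--Knuth.
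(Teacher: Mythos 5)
The paper offers no proof of this statement at all---it is imported from Gordon, Gordon--Houten and Bender--Knuth---so your argument must stand on its own, and as written it breaks at the second step. Involutions of $[n]$ with longest decreasing subsequence at most $2k$ are \emph{not} in bijection with oscillating tableaux of length $n$ from $\emptyset$ to $\emptyset$ of height at most $2k$: such tableaux exist only for even $n$ (they correspond to \emph{perfect} matchings, so the fixed points of the involution have nowhere to go), and already for $n=2$, $k=1$ there are two involutions with no decreasing subsequence of length $3$ but only one oscillating tableau $(\emptyset,(1),\emptyset)$. Moreover the height bound is off by a factor of two: under the Chen--Deng--Du--Stanley--Yan correspondence a $j$-nesting of the matching produces a decreasing subsequence of length $2j$ in the involution, so ``longest decreasing subsequence $\le 2k$'' translates to oscillating tableaux of height at most $k$, ending in a \emph{row} whose length records the number of fixed points. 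But with those corrections your chain of bijections becomes exactly the assertion that oscillating tableaux of height at most $k$ ending in a row are equinumerous with standard Young tableaux of height at most $2k$---which is Conjecture~\ref{thm:conjecture}, the open problem of this paper. The route is therefore circular: the delicate point is not the determinant folding but the treatment of fixed points/open arcs, which is precisely what remains unproved.

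The reflection step is also not right as stated. For walks in $W_{2k}$ (or $W_k$) with steps $\pm e_i$ the Gessel--Zeilberger group is the hyperoctahedral group, not the symmetric group, because the chamber has the wall $x_k>0$; the resulting formula is Grabiner--Magyar's $\det\left(b_{\mu_i-\lambda_j}-b_{\mu_i+\lambda_j}\right)$ quoted in Section~\ref{sec:Conjecture}, a \emph{difference} of Bessel functions, not $\det[b_{j-i}]$. A proof that avoids the open conjecture goes instead through symmetric functions: by Schensted, $\tilde{Y}_{2k}$ is the exponential generating function for involutions with bounded decreasing subsequence, i.e.\ the exponential specialization of $\sum_{\ell(\lambda)\le 2k}s_\lambda$, and the bounded Littlewood identity collapses this sum to a $k\times k$ determinant whose specialization is $\det[b_{i-j}+b_{i+j-1}]$; this is the Gordon--Houten/Bender--Knuth/Gessel argument. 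Your folding intuition is morally correct---it is how the $2k$-row Schur sum collapses to a rank-$k$ (symplectic-type) determinant, and it does explain the plus sign and the shift by one---but it must be applied to that Schur-function identity, not to a walk model whose equivalence with standard Young tableaux is the very thing being conjectured.
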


Around the same time, Grabiner-Magyar~\cite{GrMa93} determined an
exponential generating function for~$\mathcal{O}^{(k)}_n({\lambda}; {\mu})$,
the number of oscillating lattice walks of length~$n$
from~${\lambda}$ to~${\mu}$, which stay within the~$k$-dimensional Weyl
chamber~$W_k$ and take steps in the positive or negative unit coordinate
vectors.
\begin{theorem}[Grabiner-Magyar~\cite{GrMa93}] For fixed~${\lambda},
  {\mu} \in W_k$, the exponential
  generating function for $\mathcal{O}_n({\lambda}; {\mu})$ satisfies
\[
O_{{\lambda}, {\mu}}(t)= \sum_{n\geq 0} |\mathcal{O}_n({\lambda};
{\mu}) |\frac{t^n}{n!} 
= \det \left( b_{{\mu_i}-{\lambda_j}}(2t)-b_{{\mu_i}+{\lambda_j}}(2t)\right)_{1\leq i, j\leq k}.
\]
\end{theorem}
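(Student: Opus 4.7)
The approach combines two classical ingredients: a tensor factorization of the unrestricted walk generating function, and the Gessel--Zeilberger reflection principle for the hyperoctahedral Weyl group $B_k$, whose open fundamental chamber is $W_k$.

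My first step is to drop the chamber condition and count walks in all of $\mathbb{Z}^k$. Because each step $\pm e_i$ modifies a single coordinate, a walk from $\lambda$ to $\mu$ in $\mathbb{Z}^k$ is a shuffle of $k$ independent one-dimensional walks from $\lambda_i$ to $\mu_i$; on the EGF side this shuffle becomes a product. The one-dimensional EGF $\sum_n \binom{n}{(n+j)/2}\, t^n/n!$ evaluates to a Bessel function, so the EGF for unrestricted walks from $\lambda$ to $\mu$ is $\prod_{i=1}^{k} b_{\mu_i-\lambda_i}(2t)$ (up to the normalization used for $b_j$).

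Next I would invoke the reflection principle. The walls of $W_k$ are the hyperplanes $\{x_i = x_j\}$ and $\{x_i = 0\}$, which generate $B_k$; its elements are signed permutations $w = (\sigma,\varepsilon)$ with $\sgn(w) = \sgn(\sigma)\prod_i \varepsilon_i$. Since the step set $\{\pm e_i\}_{i=1}^k$ is $B_k$-invariant and each step crosses at most one wall at a unique moment, the standard involution (reflect the suffix of a walk at its first wall contact across that wall) cancels every walk that touches a wall. This yields
\[
|\mathcal{O}_n(\lambda,\mu)| \;=\; \sum_{w\in B_k} \sgn(w)\, u_n(\lambda, w\mu),
\]
where $u_n(\lambda,\nu)$ counts unrestricted walks from $\lambda$ to $\nu$. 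Passing to EGFs, using $b_{-j}=b_j$ to handle signs on the coordinates, the two choices $\varepsilon_i = \pm 1$ combine into $b_{\mu_{\sigma(i)}-\lambda_i}(2t) - b_{\mu_{\sigma(i)}+\lambda_i}(2t)$, so summing over $\sigma \in S_k$ weighted by $\sgn(\sigma)$ is exactly the Leibniz expansion of the announced determinant.

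The principal technical point is the cleanliness of the reflection involution: one must check that every walk leaving the closed chamber has a well-defined first wall-crossing time (immediate, since each step changes one coordinate by $\pm 1$), that reflecting its suffix across that wall produces a walk whose endpoint is the $B_k$-image of the original, and that this pairing reverses the sign of the associated Weyl group element. When several walls coincide at a single vertex one must fix a consistent priority among them, but otherwise the argument follows the classical Gessel--Zeilberger template verbatim; the remaining algebra is bookkeeping around the Bessel symmetry $b_{-j}=b_j$.
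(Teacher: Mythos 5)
Your argument is correct: the paper itself states this result only as a citation to Grabiner--Magyar and gives no proof, and your tensor-factorization-plus-reflection argument is precisely the standard Gessel--Zeilberger proof used in that source, with the sum over signed permutations in $B_k$ collapsing (via $b_{-j}=b_j$) to the Leibniz expansion of the stated determinant. The only caveat is the normalization of $b_j$, which you already flag; everything structural checks out.
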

We specialize the start and end positions as
${\lambda}=\delta=(d, d-1, \ldots, 1)$ and~${\mu}=
\delta+me_1=(d+m', d-1, \ldots, 1)=(m, d-1, \ldots, 1)$. We are
interested in the sum over all values of $m$, and 
define  $O_k(t)\equiv\sum_{m\geq 0}O_{{\delta}, me_1+\delta}(t)$.
Using their result we deduce the following. 
\begin{proposition}
  The exponential generating function for the class of oscillating
  tableaux ending with a row shape is the finite sum
\[
\tilde{O}_k(t)= \sum_{u=0}^{k-1}(-1)^u \sum_{\ell=u}^{2k-1-2u}(I_{\ell}) \det (I_{i-j}-I_{kd-i-j})_{0 \leq i \leq k-1, i\neq u, 1\leq j \leq k-1}.
\]
\end{proposition}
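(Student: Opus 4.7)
The plan is to apply the Grabiner--Magyar formula with endpoints $\lambda = \delta = (k, k-1, \dots, 1)$ and $\mu = me_1 + \delta = (m+k, k-1, \dots, 1)$, then sum over $m \geq 0$. First I would observe that in the $k \times k$ matrix $M_{i,j} = b_{\mu_i - \delta_j}(2t) - b_{\mu_i + \delta_j}(2t)$, every row with $i \geq 2$ is independent of $m$, since then $\mu_i - \delta_j = j - i$ and $\mu_i + \delta_j = 2k - i - j + 2$ involve no $m$. Only the first row, whose entries are $b_{m+j-1}(2t) - b_{m+2k-j+1}(2t)$, depends on $m$.

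Because the remaining rows are fixed, multilinearity of the determinant permits pulling the sum $\sum_{m \geq 0}$ inside the first row. The key calculation is the telescoping cancellation
\[
\sum_{m \geq 0} \bigl(b_{m+j-1}(2t) - b_{m+2k-j+1}(2t)\bigr) \;=\; \sum_{\ell = j-1}^{2k-j} b_\ell(2t),
\]
valid for $1 \leq j \leq k$. There is no convergence issue at the level of formal power series since $b_\ell(2t) = O(t^\ell)$. This replaces each first-row entry by a finite sum of Bessel functions, reducing the problem to the evaluation of a single $k \times k$ determinant.

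The last step would be Laplace expansion of this determinant along its first row. Setting $u = j - 1$, the sign becomes $(-1)^{1+j} = (-1)^u$, the scalar factor becomes $\sum_{\ell = u}^{2k-u-1} b_\ell(2t)$, and the associated $(k-1) \times (k-1)$ minor is obtained from the fixed rows $i = 2, \dots, k$ by deleting column $u+1$. Transposing (which does not affect the value of the determinant) and relabelling the row indices as $i \in \{0, \dots, k-1\} \setminus \{u\}$ and the column indices as $j \in \{1, \dots, k-1\}$ puts the minor in the form $\det\bigl(I_{i-j} - I_{2k-i-j}\bigr)_{i,j}$, matching the statement up to evident typographical corrections (notably ``$kd$'' should read ``$2k$'', and by this derivation the upper summation index in the displayed Bessel sum is $2k - u - 1$).

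The main obstacle will be keeping the indexing straight, since the stated minor is recorded after a transposition and a shift of both row and column labels, and since one must reconcile the normalisation $b_\ell(x) = I_\ell(2x)$ used earlier in the paper with the $I_\ell$ written in the statement. Otherwise, the proof requires no ingredients beyond the Grabiner--Magyar formula together with multilinearity of the determinant, term-by-term cancellation in the Bessel series, and cofactor expansion.
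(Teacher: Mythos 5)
Your argument is exactly the one the paper sketches in a single sentence (direct application of Grabiner--Magyar, telescoping of the sum over $m$ in the first row via multilinearity, and the identity $b_{-\ell}=b_\ell$ to normalise the minors), just carried out in full with cofactor expansion. Your typographical corrections are also right: comparing against the paper's displayed $\tilde{O}_2(t)$ confirms that the upper summation limit should be $2k-1-u$ rather than $2k-1-2u$, and that $kd-i-j$ should read $2k-i-j$.
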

This follows from the fact that the infinite sum which arises from direct
application of Grabiner and Magyar's formula telescopes, 
using also the identity $b_{-k}=b_k$.

Our conjecture is equivalent to
$\tilde{O}_k(t)=\tilde{Y}_{2k}(t)$. Here are the first two values:
\[
\tilde{O}_1(t)= \tilde{Y}_2(t)= b_0+b_1, \quad \tilde{O}_2(t) = \tilde{Y}_4(t)=b_0^2+b_0b_1+b_0b_3-2b_1b_2-b_2^2-b_1^2+b_1b_3.
\]
The determinants are easy to compute for small
values, and they agree for $k\leq 8$.

\subsection{A diagonal approach}
\label{sec:diag}
Using Theorem~\ref{thm:Main2}, we can reformulate the conjecture to be
strictly in terms of lattice paths by viewing standard Young tableaux as
oscillating tableaux with no deleting steps.
\begin{conjecture}
  The set of oscillating lattice walks of length $n$ in $W_k$ starting
  at $\delta=(k, k-1, \dots, 1)$ and ending at the boundary at the
  boundary $\{me_1+\delta: m\geq 0\}$ is in bijection with the set of
  oscillating lattice walks of length $n$ in $W_{2k}$, using only
  positive steps ($e_j$), starting at $\delta$ and ending anywhere in
  the region.
\end{conjecture}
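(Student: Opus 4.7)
Positive-step walks in $W_{2k}$ starting at $\delta$ and ending anywhere correspond exactly to standard Young tableaux of height at most $2k$: each step $e_j$ adds a cell to row $j$, and the chamber condition enforces weakly decreasing row lengths. Under this identification the reformulated claim is equivalent to $\tilde{O}_k(t)=\tilde{Y}_{2k}(t)$, so the plan is to prove this identity for all $k$, either algebraically or by exhibiting an explicit dimension-doubling bijection.

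I would pursue the algebraic route first, since the two generating functions are already expressed in Section~\ref{sec:diag} as determinants of the Bessel functions $b_j$. The target is then a polynomial identity in the $b_j$'s that does not depend on $k$. Row and column operations, combined with $b_{-k}=b_k$, should bring the two matrices into a common form, after which a Jacobi--Trudi or Pl\"ucker-style manipulation would close the identification. Equivalently, via Lindstr\"om--Gessel--Viennot each determinant enumerates families of non-intersecting lattice paths, and a sign-reversing involution on the intersecting configurations would yield equality combinatorially.

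In parallel I would search for a direct bijection, motivated by Fusy's remark that inverse RSK sends a standard Young tableau $T$ to an involution via the pair $(T,T)$. The natural candidate is to send each oscillating step $e_i$ in $W_k$ to a positive step $e_{2i-1}$ in $W_{2k}$, and each $-e_i$ to $e_{2i}$, producing a walk of the same length. The design constraints are: (i) the image walk stays in $W_{2k}$ at every prefix, and (ii) ``ending on the row boundary $\{me_1+\delta\}$'' in $W_k$ translates to ``ending anywhere'' in $W_{2k}$. Producing such a map by a local growth rule, rather than a naive reassignment, is the substantive content; one can hope the growth rule is governed by the current coordinate configuration in much the way that Fomin's growth diagrams govern RSK.

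The main obstacle is preserving the Weyl-chamber inequalities at every prefix under doubling. A $+e_i$ immediately followed by $-e_i$ in $W_k$ leaves the walk unchanged, but its naive image in $W_{2k}$ takes two distinct steps that can violate strict inequalities with neighbouring coordinates. A correct bijection will likely have to adjust, at each step, which of the two ``doubled'' coordinates receives the push, as a function of the current state. The computer-verification strategy of Section~\ref{sec:diag} already settles $k\leq 8$ but grows rapidly in the order and degree of the annihilating operators, so without either the determinant identity or the growth rule, the uniform statement appears out of reach by brute force alone.
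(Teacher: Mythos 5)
The statement you are addressing is presented in the paper as a \emph{conjecture}: the paper does not prove it, but only reformulates it and supplies evidence (agreement of the Bessel-determinant generating functions $\tilde{O}_k$ and $\tilde{Y}_{2k}$ for $k\leq 8$, plus diagonal expressions, one of which is itself still conjectural). Your proposal correctly reproduces the paper's own reduction --- positive-step walks in $W_{2k}$ from $\delta$ are exactly standard Young tableaux of height at most $2k$, so the claim is equivalent to $\tilde{O}_k(t)=\tilde{Y}_{2k}(t)$ --- but beyond that point it is a research plan rather than a proof, and neither of its two routes is carried out. That is the genuine gap.

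Concretely: (a) the determinant route asserts that ``row and column operations'' together with $b_{-j}=b_j$ and a ``Jacobi--Trudi or Pl\"ucker-style manipulation'' will identify $\det[b_{i-j}+b_{i+j-1}]$ with the telescoped sum of Grabiner--Magyar determinants, but no such manipulation is exhibited; this is precisely the step at which the paper itself stalls, and the summation over the endpoint parameter $m$ (with its telescoping) makes the two determinants structurally quite different, so the identification is not a routine reduction. (b) The bijective route, doubling each $\pm e_i$ to $e_{2i-1}$ or $e_{2i}$, is one you yourself acknowledge violates the strict Weyl-chamber inequalities in $W_{2k}$ for $k\geq 2$ (it does recover the paper's $k=1$ bijection), and the ``local growth rule'' that would repair it is named as a desideratum, not constructed. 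Since the substantive content of the conjecture lies exactly in closing one of these two gaps, the proposal should be regarded as a correct restatement and a reasonable plan of attack, consistent with the paper's own framing, but not as a proof.
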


To approach this, we consider the set of walks separately using
some standard enumeration techniques: namely the orbit sum method and
results on reflectable walks in Weyl Chambers. Some of the enumerative parallels
of these strategies in this context are discussed
in~\cite{MeMi14c}. The advantage of these diagonal representations is
potential access to asymptotic enumeration formulas, and possibly
alternative combinatorial representations. All of the generating
functions are D-finite, and we can use the work of~\cite{BoLaSa13}
to determine bounds on the shape of the annihilating differential
equation.
\begin{theorem}
\label{thm:osc}
The ordinary generating function for oscillating walks starting at $\delta$ and
ending on the boundary $\{me_1+\delta: m\geq 0\}$, is given by the
following formula:
\[ O_k(t)=\Delta \left[ \frac{t^{2d-1}(z_3 z_4^2 \cdots
    z_k^{k-2})(z_1+1)\prod_{1\leq j<i \leq k} (z_i-z_j)(z_iz_j-1)
    \cdot \prod_{2 \leq i \leq k} (z_i^2 -1)}{1-t(z_1\cdots z_k)(z_1 +
    \oz_1 + \cdots z_d + \oz_d)}  \right]. \]
\end{theorem}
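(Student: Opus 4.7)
The plan is to apply the Gessel--Zeilberger reflection principle for the hyperoctahedral group $B_k$, which acts on $\mathbb Z^k$ with fundamental domain $W_k$ and preserves the step set $\{\pm e_i\}_{1\le i\le k}$. This gives
\[
N_{W_k}(\delta\to\mu,n) \;=\; \sum_{\sigma\in B_k}(-1)^\sigma [z^{\mu-\sigma\delta}]\,S(z)^n,
\]
where $S(z)=z_1+\oz_1+\cdots+z_k+\oz_k$. Summing this identity against $t^n$ and over $\mu=me_1+\delta$ for $m\ge 0$ gives
\[
O_k(t) \;=\; \sum_{\sigma\in B_k}(-1)^\sigma \sum_{m\ge 0} [z^{me_1+\delta-\sigma\delta}]\,\frac{1}{1-tS(z)},
\]
which is the starting point for the computation.

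Next I would evaluate the orbit sum $A(z)=\sum_\sigma(-1)^\sigma z^{\sigma\delta}$ in closed form. Since $A$ vanishes on every reflection hyperplane of $B_k$ (the walls $z_i=z_j$, $z_iz_j=1$, and $z_i^2=1$), a Weyl-denominator-style factorization together with comparison of leading monomials yields
\[
A(z) \;=\; \frac{\prod_{i=1}^{k}(z_i^2-1)\,\prod_{1\le j<i\le k}(z_i-z_j)(z_iz_j-1)}{(z_1\cdots z_k)^k}.
\]
The geometric sum over $m$ contributes a factor $(1-z_1)^{-1}$, and the key cancellation is that the factor $(z_1-1)$ hidden inside $(z_1^2-1)=(z_1-1)(z_1+1)$ absorbs $(1-z_1)$ up to sign. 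What remains is exactly the polynomial $(z_1+1)\prod_{i\ge 2}(z_i^2-1)\prod_{j<i}(z_i-z_j)(z_iz_j-1)$ appearing in the numerator of the theorem.

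The final step is to recast the resulting Laurent expression as a diagonal of a rational function whose denominator has the prescribed form $1-t(z_1\cdots z_k)S(z)$. I would apply the standard shift identity
\[
\Delta\bigl[t^{\ell}(z_1\cdots z_k)^{\ell}\,G(t,z)\bigr] \;=\; t^{\ell}\,\Delta[G]
\]
with $\ell=2k-1$ to absorb the negative monomial $z_1^{-(2k-1)}z_2^{-(2k-1)}z_3^{-(2k-2)}\cdots z_k^{-(k+1)}$ that arose from $A(\oz)/z^{\delta}$, simultaneously rewriting the denominator from $1-tS(z)$ to $1-t(z_1\cdots z_k)S(z)$. Because the shift is uniform in $t$ but graded in the exponents of $z_3,\ldots,z_k$, it produces precisely the residual monomial $z_3 z_4^2\cdots z_k^{k-2}$ in the numerator of the stated formula. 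The main obstacle is the careful bookkeeping of this shift together with the various sign contributions --- from $(-1)^{\sigma}$, from the ordering convention $j<i$ in the Vandermonde-type factor, and from the parity $\binom{k}{2}$ that appears in the alternant identity. An alternative route is to follow the Bousquet-M\'elou--Xin contour-integral / positive-part analysis used for $k=2$ in Proposition~\ref{thm:BX} and then convert the positive-part operator into a diagonal as in Lemma~\ref{thm:setup}, but the orbit-sum approach above is the most direct in this general multivariate setting.
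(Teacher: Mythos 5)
Your proposal follows exactly the route the paper takes: it declares the result ``a rather direct application of Gessel and Zeilberger's formula for reflectable walks in Weyl chambers,'' and your $B_k$ reflection argument, type-$C$ Weyl-denominator evaluation of the alternant $\sum_\sigma(-1)^\sigma z^{\sigma\delta}$, geometric summation over $m$ with the $(z_1^2-1)/(1-z_1)$ cancellation, and the $t^{2k-1}(z_1\cdots z_k)^{2k-1}$ shift to diagonal form are precisely the details the paper defers to its long version. The bookkeeping you describe reproduces the stated numerator, including the residual monomial $z_3z_4^2\cdots z_k^{k-2}$, so the argument is sound and matches the paper's intended proof.
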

The proof of Theorem~\ref{thm:osc} is a rather direct application of
Gessel and Zeilberger's formula for reflectable walks in Weyl
chambers.

By applying an orbit sum analysis, we derive the following
expression. There are still some points to verify in the computation,
so although we believe it, and have verified it up to $k=8$, this form
remains as a conjecture. 
\begin{conjecture}The ordinary generating function for standard Young
  tableau of height at most $k$ is 
\[ Y_k(t) = \Delta \left( \frac{(z_1\cdots z_{k-1})\Phi(\overline{\bz})}{(1-t(z_1\cdots z_{k-1})S(\bz))(1-z_1)\cdots(1-z_{k-1})} \right), \]
where 
\[ S(\bz) = \oz_1 + \oz_1z_2 + \cdots + \oz_{k-1}z_{k-2} + z_{k-1}, \]
and
\[ \Phi(\bz) = \frac{(z_1z_{k-1}-1)}{(z_1\cdots z_{k-1})^{k-1}}
\prod_{j=1}^{k-2}(z_1z_j - z_{j+1})\prod_{j=2}^{k-1}(z_{k-1}z_j -
z_{j-1})\prod_{j=1}^{k-3}\prod_{k=j+2}^{k-1}(z_jz_k-z_{j+1}z_{k-1}). \]
\end{conjecture}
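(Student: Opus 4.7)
The plan is to realize standard Young tableaux of height bounded by $k$ as constrained lattice walks and apply the orbit-sum / kernel method that has proved successful for walks in quarter planes and higher-dimensional Weyl chambers. First, I would encode a standard Young tableau of shape $\lambda=(\lambda_1,\ldots,\lambda_k)$ by the trajectory of its shape under successive box insertions: working in difference coordinates such as $\mu_i = \lambda_i - \lambda_{i+1}$ (with a boundary convention at $i=k$), adding a box to row $i$ becomes a specific unit step, and collecting all such step monomials reproduces exactly the step polynomial $S(\bz)$ in the statement. The Young-diagram condition $\lambda_i\geq\lambda_{i+1}$ then becomes the requirement that the walk remain in the non-negative orthant, while summing over all admissible ending shapes introduces the factor $1/\bigl((1-z_1)\cdots(1-z_{k-1})\bigr)$ that tracks the endpoint in each coordinate.

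Next, I would write the multivariate functional equation for the generating function $F(t;\bz)$ of such constrained walks and apply the orbit-sum technique of Bousquet-M\'elou and Mishna (building on Gessel--Zeilberger). The idea is to find a finite group $\mathcal{G}$ of birational transformations on $\bz$ under which the kernel $1-t(z_1\cdots z_{k-1})S(\bz)$ is invariant; the alternating sum $\sum_{g\in\mathcal{G}}\sgn(g)\,g(\bz^{\alpha}F)$ for a suitably chosen monomial $\bz^{\alpha}$ should cancel all boundary contributions and yield a closed-form rational expression whose positive part equals the chamber generating function. The Vandermonde-like polynomial $\Phi(\overline{\bz})$ displayed in the conjecture is exactly of the shape produced by such an orbit sum: its factors $z_1z_j-z_{j+1}$, $z_{k-1}z_j - z_{j-1}$, and $z_jz_k - z_{j+1}z_{k-1}$ are the characteristic Vandermonde-type differences one obtains by alternating over a reflection-like group, and the prefactor $1/(z_1\cdots z_{k-1})^{k-1}$ is the expected monomial shift. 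Finally, the positive-part extraction is converted to the stated diagonal via the substitution $t\mapsto t(z_1\cdots z_{k-1})$, which aligns the relevant coefficients onto the main diagonal.

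The principal obstacle will be identifying and verifying the correct group $\mathcal{G}$. The step set $S(\bz)$ is not invariant under the ambient symmetric-group action, so $\mathcal{G}$ is almost certainly a non-standard birational group rather than a classical Weyl group; one must verify that it is finite, that the alternating sum leaks no spurious terms across the boundary hyperplanes, and that the resulting numerator factors exactly as the $\Phi$ in the statement. A natural first step is to check the construction in low dimensions (say $k=2,3$) where the group can be written out explicitly and $\Phi$ compared term by term, then to propagate the structural features to general $k$. Once $\mathcal{G}$ is in hand, the remaining steps -- identifying $\Phi$ with the orbit-sum polynomial and recasting positive-part extraction as a diagonal -- are routine, and the numerical agreement already established for $k\leq 8$ provides strong evidence that this program will succeed.
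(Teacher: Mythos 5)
Your proposal retraces, almost step for step, the derivation that the paper itself gives — which is precisely why it does not constitute a proof: the statement is labelled a \emph{conjecture} in the paper, and the authors explicitly say that "there are still some points to verify in the computation," having checked it only for $k\leq 8$. The paper reduces the claim to a single concrete identity, namely that
\[
\sum_{\sigma \in \mathcal{G}} (-1)^{\sgn(\sigma)}\, \sigma(z_1\cdots z_{d-1})
\]
equals the product formula defining $\Phi$, where $\mathcal{G}$ is the explicit finite group generated by the rational maps $\phi_1,\dots,\phi_{d-1}$ listed in Section 5.2 (a birational realization of $S_d$ acting on the kernel $1-t(z_1\cdots z_{k-1})S(\bz)$). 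Your plan arrives at exactly the same bottleneck — you call it "the principal obstacle" of "identifying and verifying the correct group $\mathcal{G}$" and checking "that the resulting numerator factors exactly as the $\Phi$ in the statement" — and then defers it. That deferral is the entire content of the open problem; everything before it (encoding SYT as walks in difference coordinates, reading off $S(\bz)$, introducing $1/((1-z_1)\cdots(1-z_{k-1}))$ for the free endpoint, converting the positive-part extraction to a diagonal via $t\mapsto t(z_1\cdots z_{k-1})$) is routine and already carried out in the paper.

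One small correction to your framing: the group is not "almost certainly a non-standard birational group rather than a classical Weyl group" in the sense of being some unknown object — the paper states that $\mathcal{G}$ is isomorphic to $S_d$; what is nonstandard is only its action, which makes the closed form of $\sum_\sigma (-1)^{\sgn(\sigma)}\sigma(z_1\cdots z_{d-1})$ hard to evaluate. So the genuinely missing ingredient is a proof of that single alternating-sum identity for all $d$ (not just finiteness of the group or absence of boundary leakage, which follow from the $S_d$ structure). Low-dimensional verification, which you propose as a "first step," is exactly what the authors have already done up to $k=8$; to go beyond the paper you would need an inductive or representation-theoretic argument (e.g., recognizing the sum as a Weyl-character-type determinant for this twisted action) that produces the factored form of $\Phi$ for general $k$.
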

In order to prove the conjecture, giving an explicit diagonal representation for the generating function of the number of 
standard Young tableaux of a given height, it is sufficient to prove that for every $d$
{\small 
\[ \sum_{\sigma \in \mathcal{G}} (-1)^{\sgn(g)} \sigma(z_1\cdots z_{d-1}) = \frac{(z_1z_{k-1}-1)}{(z_1\cdots z_{k-1})^{k-1}}
\prod_{j=1}^{k-2}(z_1z_j - z_{j+1})\prod_{j=2}^{k-1}(z_{k-1}z_j -
z_{j-1})\prod_{j=1}^{k-3}\prod_{k=j+2}^{k-1}(z_jz_k-z_{j+1}z_{k-1}),\] }
where $\mathcal{G}$ is the finite group of rational transformations of
$\mathbb{R}^{d-1}$ generated by{\small 
\begin{align*}
\phi_1: (z_1,\dots,z_{d-1}) &\mapsto (\oz_1z_2,\dots,z_{d-1}) \\
(1 < k < d-1) \quad \phi_k: (z_1,\dots,z_{d-1}) &\mapsto (z_1,\dots,z_{k-1},\quad z_{k-1}\oz_kz_{k+1}, \quad z_{k+1}, \dots,z_{d-1}) \\
\phi_{d-1}: (z_1,\dots,z_{d-1}) &\mapsto (z_1,\dots,z_{d-1},z_{d-2}\oz_{d-1}),
\end{align*}}
which acts on $f \in \mathbb{R}[z_1,\dots,z_d]$ by $\sigma(f(z_1,\dots,z_{d-1})) = f(\sigma(z_1,\dots,z_{d-1}))$.  It can be shown that $\mathcal{G}$ is isomorphic to the symmetric group $S_d$, however the action of its elements in terms such as $\sigma(z_1\cdots z_{d-1})$ is less clear.

%---------------------------------------
\section{Conclusion}
\label{sec:Conclusion}
%---------------------------------------
These problems are clearly begging for combinatorial proofs. There are
a huge number of candidates for Baxter objects to choose from, and
hopefully the Catalan subclasses are retained. 

The diagonal expressions in Section~\ref{sec:diag}, which are of
interest in their own right, are also compelling in a wider
context. We are quite interested in the conjecture of Christol on
whether or not globally bounded D-finite series can always be
expressed as a diagonal of a rational function. The class of standard
Young tableau of bounded height is a very intriguing case, and
follows our study of lattice path walks~\cite{MeMi14c} wherein we
consider the study of diagonal expressions for known D-finite
classes.
\section*{Acknowledgements}
We are extremely grateful to Eric Fusy, Julien Courtiel, Sylvie
Corteel, Lily Yen, Yvan le Borgne and Sergi Elizalde for stimulating
conversations, and important insights.

%--------------------------------------
% bilbiography
%--------------------------------------
\small
%\bibliographystyle{plain}
%\bibliography{main} 

\end{document}